\theoremstyle{plain}
\newtheorem{theorem}{Theorem}[section]
\newtheorem{proposition}[theorem]{Proposition}
\newtheorem{problem}[theorem]{Problem}
\newtheorem{conjecture}[theorem]{Conjecture}
\newtheorem{lemma}[theorem]{Lemma}
\theoremstyle{definition}
\newtheorem{definition}[theorem]{Definition}
\newtheorem{remark}[theorem]{Remark}
\newcommand{\emp}{\varnothing}
\newcommand{\ba}{\boldsymbol{\alpha}}
\newcommand{\cA}{\mathcal{A}}
\newcommand{\rc}{\mathrm{c}}
\newcommand{\bk}{\boldsymbol{k}}
\newcommand{\PP}{\mathcal{P}}
\newcommand{\QQ}{\mathbb{Q}}
\newcommand{\RR}{\mathbb{R}}
\newcommand{\ZZ}{\mathbb{Z}}
\newcommand{\fZ}{\mathfrak{Z}}
\newcommand{\cZ}{\mathcal{Z}}
\numberwithin{equation}{section}
\address{Department of Mathematical Sciences, Aoyama Gakuin University, 5-10-1 Fuchinobe, Chuo-ku, Sagamihara-shi, Kanagawa, 252-5258, Japan}
\email{seki@math.aoyama.ac.jp}
\title[]{Regular primes, non-Wieferich primes, and finite multiple zeta values of level $N$}
\author{Shin-ichiro Seki}
\date{}
\thanks{This research was supported by JSPS KAKENHI Grant Number 21K13762.}
\keywords{regular primes, non-Wieferich primes, finite multiple zeta values, Bernoulli numbers, Fermat quotients}
\begin{document}
\maketitle
\begin{abstract}
We introduce finite multiple zeta values of general levels and discuss the relationship between the non-zeroness of these values and regular or non-Wieferich primes.
Because it is challenging to prove the infinitude of these kinds of primes, we suggest tackling several related problems more promptly.
\end{abstract}
\section{Introduction}\label{sec:Intro}
The following two old conjectures about prime numbers are still open.
\begin{conjecture}\label{conj:regular}
There exist infinitely many regular primes.
\end{conjecture}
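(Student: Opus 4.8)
The plan is to translate the statement into the language of Bernoulli numbers through Kummer's criterion, which declares a prime $p \ge 5$ regular exactly when $p$ divides none of the numerators of $B_2, B_4, \ldots, B_{p-3}$. The target then becomes: for infinitely many $p$, the residues $B_2 \bmod p, B_4 \bmod p, \ldots, B_{p-3} \bmod p$ are all nonzero. Heuristically each residue looks like an independent uniform element of $\mathbb{Z}/p\mathbb{Z}$, so the chance that $p$ is regular is about $(1-1/p)^{(p-3)/2} \to e^{-1/2} \approx 0.607$, matching Siegel's conjectural density and the numerical record. The first step would be to seek a rigorous substitute for this heuristic.

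The natural template is the classical proof that there are infinitely many \emph{irregular} primes (Jensen): the Bernoulli numbers $|B_{2k}|$ grow like $2(2k)!/(2\pi)^{2k}$, so their numerators acquire arbitrarily large prime factors, and any prime $p$ dividing $\mathrm{num}(B_{2k})$ with $2 \le 2k \le p-3$ is irregular by Kummer's criterion; the growth then forces infinitely many irregular primes. I would first ask whether this argument can be dualized, but the quantifier is wrong: regularity requires that \emph{no} admissible index exhibit divisibility, and a size estimate that manufactures one divisibility says nothing about simultaneously ruling out all $(p-3)/2$ of them. A more promising line would be analytic, encoding the divisibilities through $p$-adic $L$-functions (via $p$-adic interpolation of $\zeta(1-2k) = -B_{2k}/(2k)$) and attempting to show that for a positive-density set of primes these values are $p$-adic units over the entire range $1 \le k \le (p-3)/2$.

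The hard part --- and here the plan stalls, as it does for everyone, since Conjecture~\ref{conj:regular} is a longstanding open problem for which no proof is known --- is that we possess no tool controlling the joint distribution of the residues $B_{2k} \bmod p$ as $k$ runs over the full interval and $p \to \infty$. We cannot even certify that a single prescribed residue is nonzero with a positive lower density in $p$, so enforcing non-vanishing across all $(p-3)/2$ indices at once is far beyond current technique. This is precisely the obstruction the present paper declines to confront head-on: rather than attacking Conjecture~\ref{conj:regular} directly, it recasts the governing non-divisibilities as non-vanishing statements for finite multiple zeta values of level $N$, in the hope that these more supple objects will admit the partial progress that the rigid Bernoulli formulation resists.
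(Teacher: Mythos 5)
The statement you were asked about is Conjecture~\ref{conj:regular} itself, which the paper poses as a longstanding open problem and never proves: everything the paper establishes is either conditional on it (Proposition~\ref{prop:main}~$i)$) or deliberately weaker than it (Problems~\ref{prob:1} and~\ref{prob:4}). Your proposal correctly recognizes this and rightly declines to claim a proof; its mathematical content --- Kummer's criterion, the $e^{-1/2}$ density heuristic for regular primes, the Jensen-style growth argument that yields infinitely many \emph{irregular} primes, and the quantifier obstruction explaining why that argument cannot be dualized --- is accurate and consistent with the paper's own discussion in Sections~\ref{sec:Bernoulli} and~\ref{sec:problems}. Since neither you nor the paper proves the conjecture, there is no proof to compare; your description of how the paper pivots to non-vanishing statements for finite multiple zeta values rather than attacking the conjecture directly matches the paper's actual strategy.
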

\begin{conjecture}\label{conj:non-Wieferich}
There exist infinitely many non-Wieferich primes.
\end{conjecture}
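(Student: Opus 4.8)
Since the assertion that there are infinitely many non-Wieferich primes is a longstanding open problem, what follows is not a complete argument but the line of attack I find most promising, together with the precise point at which it stalls. The plan is to work with the Fermat quotient $q_p(2)=(2^{p-1}-1)/p$, so that $p$ is non-Wieferich exactly when $q_p(2)\not\equiv 0\pmod p$. Writing $d_p=\mathrm{ord}_p(2)$ and invoking the lifting-the-exponent identity $v_p(2^{p-1}-1)=v_p(2^{d_p}-1)$ (valid because $d_p\mid p-1$ and $p\nmid (p-1)/d_p$), one reduces the Wieferich congruence to a divisibility to the first power: $p$ is non-Wieferich if and only if $p\,\|\,2^{d_p}-1$. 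This turns an additive condition on $q_p(2)$ into a squarefreeness question about the integers $2^n-1$.

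First I would fix the supply of candidate primes by Bang--Zsigmondy: for every $n>6$ the integer $2^n-1$ has a primitive prime divisor $p_n$, i.e.\ one with $d_{p_n}=n$, and distinct $n$ give distinct primes. By the reduction above, $p_n$ is non-Wieferich precisely when $p_n\,\|\,2^n-1$; since the primitive prime divisors of $2^n-1$ are, up to one intrinsic factor dividing $n$, exactly the prime divisors of the cyclotomic value $\Phi_n(2)$, it suffices to produce infinitely many $n$ for which $\Phi_n(2)$ is not powerful. Assume the contrary, that all but finitely many primitive prime divisors are Wieferich; then $\Phi_d(2)$ is squarefull for every large $d\mid n$, so $\mathrm{rad}(\Phi_d(2))\ll \Phi_d(2)^{1/2}$, and multiplying over the divisors forces $\mathrm{rad}(2^n-1)\ll 2^{n/2+o(n)}$.

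The main obstacle is exactly this last estimate. What one needs to contradict the assumption is an unconditional lower bound of the shape $\mathrm{rad}(2^n-1)\gg 2^{n(1-\varepsilon)}$ (equivalently, a uniform bound on the powerful part of $2^n-1$), and in the present setting this is essentially the $abc$ conjecture: applying $abc$ to $1+(2^n-1)=2^n$ yields $\mathrm{rad}(2^n-1)\gg_\varepsilon 2^{n(1-\varepsilon)}$ and hence the desired infinitude, but no such bound is known unconditionally. The strongest unconditional inputs come from Baker's theory of linear forms in logarithms (Stewart and others), which control the largest prime factor and the radical of $2^n-1$ only by quantities that are at best mildly superpolynomial in $n$ -- astronomically short of the exponential size $2^{n(1-\varepsilon)}$ demanded here. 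Closing this gap, rather than any sieve-theoretic or combinatorial maneuver, is where the genuine difficulty lies, and it is precisely why the statement remains a conjecture.

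Faced with this impasse, the route I would actually pursue is the one the present paper advocates: reformulate the non-vanishing of $q_p(2)$ through the finite multiple zeta values of level $N$ introduced below, and attack the weaker, more tractable related problems -- for instance non-Wieferich primes to a variable base, or in a prescribed residue class -- where the multiplicativity $q_p(ab)\equiv q_p(a)+q_p(b)\pmod p$ can be exploited to preclude the simultaneous vanishing of $q_p$ on a large set of bases. Pinning such an argument to the single base $2$, however, reintroduces exactly the radical obstruction above, which is why I expect the unconditional statement to remain out of reach until a substitute for $abc$ becomes available.
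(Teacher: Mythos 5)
This statement is Conjecture~\ref{conj:non-Wieferich}, which the paper leaves open and does not prove, so there is no proof to deviate from; your proposal correctly declines to claim one, and its content matches the paper's own treatment in Section~\ref{sec:problems}, which cites Silverman's theorem that the $abc$ conjecture implies the infinitude of non-Wieferich primes and then retreats to the weaker Problems~\ref{prob:2} and \ref{prob:3} on Fermat quotients with variable base. Your reduction via lifting the exponent to the condition $p\,\|\,2^{d_p}-1$, the Bang--Zsigmondy supply of primitive prime divisors, and the identification of the missing unconditional input as the $abc$-strength bound $\mathrm{rad}(2^n-1)\gg_{\varepsilon}2^{n(1-\varepsilon)}$ are all accurate and constitute precisely Silverman's conditional argument~\cite{Silverman}, with the obstruction (Stewart-type bounds from linear forms in logarithms being far too weak) correctly located.
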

For a prime $p$, we call $p$ \emph{regular} when $p$ is odd and the class number of the $p$th cyclotomic field is not divisible by $p$.
We call $p$ \emph{non-Wieferich} when $2^{p-1}-1$ is not divisible by $p^2$.
These kinds of primes are each associated with the following theorems related to Fermat's last theorem.
\begin{theorem}[Kummer~\cite{Kummer}]
If $p$ is a regular prime, then Fermat's last theorem for the exponent $p$ is correct.
\end{theorem}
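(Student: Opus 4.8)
The plan is to work inside the ring of integers $\ZZ[\zeta_p]$ of the $p$-th cyclotomic field $\QQ(\zeta_p)$, and to exploit the factorization
\[
x^p + y^p = \prod_{i=0}^{p-1}\bigl(x + \zeta_p^{\,i} y\bigr).
\]
Supposing toward a contradiction that $x^p + y^p = z^p$ has a solution in pairwise coprime nonzero integers, I would split into the classical Case I, where $p \nmid xyz$, and Case II, where $p$ divides exactly one of $x, y, z$; the case $p = 3$ is classical, so I assume $p \ge 5$. Regularity enters only through two consequences of $p \nmid h$, where $h$ is the class number: first, that if the $p$-th power of an ideal is principal then the ideal itself is principal (its ideal class is killed by $p$, hence trivial since $\gcd(p, h) = 1$); and second, through Kummer's lemma on units, stated at the end.

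In Case I, first I would check that the ideals $(x + \zeta_p^{\,i} y)$ are pairwise coprime: a common prime divisor of two factors divides their difference $(\zeta_p^{\,i} - \zeta_p^{\,j})y$, an associate of $\lambda y$ where $\lambda = (1 - \zeta_p)$; since $\lambda$ itself cannot occur (it would give $p \mid x + y$, hence $p \mid z$, against $p \nmid xyz$), such a divisor would divide $y$, and by symmetry $x$, contradicting $\gcd(x,y) = 1$. As the product of these coprime ideals is $(z)^p$, unique factorization of ideals forces each $(x + \zeta_p^{\,i} y)$ to be a $p$-th power $\mathfrak{a}_i^{\,p}$; regularity makes $\mathfrak{a}_i$ principal, so that $x + \zeta_p^{\,i} y = u_i\, \alpha_i^{\,p}$ with $u_i$ a unit and $\alpha_i \in \ZZ[\zeta_p]$.

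Next I would extract congruences modulo $p$. Writing $u_1 = \zeta_p^{\,k} v$ with $v$ a unit of the maximal real subfield, and using that $\alpha_1^{\,p}$ is congruent to a rational integer modulo $p$ (reduce mod $p$ and apply Frobenius, noting $\zeta_p^{\,p} = 1$), I would compare the relation for $x + \zeta_p y$ with its complex conjugate to obtain a congruence of the shape $x + \zeta_p y \equiv \zeta_p^{\,s}\,(x + \zeta_p^{-1} y) \pmod p$. Reading this in the $\ZZ/p$-basis $1, \zeta_p, \dots, \zeta_p^{\,p-2}$ and running through the possible residues of $s$ either forces $p \mid xy$ — excluded in Case I — or yields $x \equiv y \pmod p$. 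By the symmetry of the Fermat equation (rewrite it as $x^p + (-z)^p = (-y)^p$, and so on) the same argument gives $x \equiv y \equiv -z \pmod p$, whence Fermat's little theorem turns $x^p + y^p - z^p \equiv 0$ into $3x \equiv 0 \pmod p$, contradicting $p \nmid x$ for $p \ge 5$.

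Case II is the genuine work. I would prove the stronger claim that $\xi^p + \eta^p = \varepsilon\, \lambda^{pn} \theta^p$ has no solution with $\xi, \eta, \theta \in \ZZ[\zeta_p]$ pairwise coprime, $\lambda \nmid \xi\eta\theta$, $\varepsilon$ a unit, and $n \ge 1$, and then descend on $n$. The factorization and ideal-to-principal steps run as before, but exactly one factor now absorbs all the powers of $\lambda$; tracking $\lambda$-adic valuations and again writing each factor as a unit times a $p$-th power produces a fresh solution of the same shape with $n$ replaced by $n - 1$, the infinite descent that finishes the proof. The hard part will be Kummer's lemma on units — that a unit of $\ZZ[\zeta_p]$ congruent to a rational integer modulo $p$ is a $p$-th power of a unit whenever $p$ is regular. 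This is where regularity is truly indispensable, and it is the common linchpin for both the unit bookkeeping in Case I and the descent in Case II.
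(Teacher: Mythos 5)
The paper offers no proof of this statement---it is quoted as Kummer's classical theorem with a citation to his 1850 paper---and your outline reproduces exactly the argument of that classical proof: factorization of $x^p+y^p$ into pairwise coprime ideals of $\ZZ[\zeta_p]$, regularity used to promote ideal $p$-th powers to principal generators, the congruence analysis modulo $p$ in Case I, and the $\lambda$-adic infinite descent resting on Kummer's lemma on units in Case II. As a sketch this is sound and is the same approach as the cited source; the one point to flag is that Kummer's lemma on units, which you state but defer, is itself the genuinely deep ingredient (its proof requires the analytic class number formula or $p$-adic methods), so the bulk of the real work of the theorem sits precisely in the step you left unproved.
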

\begin{theorem}[Wieferich~\cite{Wieferich}]
If $p$ is a non-Wieferich prime, then the first case of Fermat's last theorem for the exponent $p$ is correct.
\end{theorem}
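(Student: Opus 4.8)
The plan is to prove the contrapositive: if the first case of Fermat's last theorem fails for the odd prime $p$, then $2^{p-1}\equiv 1\pmod{p^2}$, so that $p$ is a Wieferich prime. Suppose then that there are integers $x,y,z$, none divisible by $p$, with
\[
x^p+y^p+z^p=0.
\]
A common prime factor of two of them would divide the third, so after dividing out I may assume $x,y,z$ are pairwise coprime. A sum of three odd integers is odd, so exactly one of $x,y,z$ is even; since the equation and the hypotheses are symmetric in the three variables, I relabel so that $2\mid x$. It then suffices to show that any rational prime $\ell\mid x$ satisfies $\ell^{p-1}\equiv 1\pmod{p^2}$, for then $\ell=2$ gives the claim.

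To reach such primes I would pass to the cyclotomic field $K=\QQ(\zeta)$ with $\zeta=\zeta_p$, whose ring of integers is $\ZZ[\zeta]$ and in which $\lambda=1-\zeta$ generates the unique prime above $p$. Factoring
\[
\prod_{j=0}^{p-1}\bigl(x+\zeta^j y\bigr)=-z^p,
\]
I would first verify that in the first case ($p\nmid xyz$) the factors $x+\zeta^j y$ are pairwise coprime as ideals. Unique factorization of ideals then forces each $(x+\zeta^j y)=\mathfrak c_j^{\,p}$ to be a perfect $p$th power. One cannot conclude that $x+\zeta y$ is a unit times a $p$th power of an element, since $p$ need not divide the class number; this is exactly why Kummer's argument does not apply and a reciprocity law is needed instead.

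The central step is Eisenstein's reciprocity law for $p$th power residues in $K$. Multiplying $x+\zeta^j y$ by a suitable power of $\zeta$ produces a \emph{primary} associate $\alpha_j$ (congruent to a rational integer modulo $\lambda^2$) without changing the ideal, so $(\alpha_j)=\mathfrak c_j^{\,p}$ remains a $p$th power. For a rational prime $\ell$ the reciprocity reads
\[
\left(\frac{\alpha_j}{\ell}\right)_{\!p}=\left(\frac{\ell}{\alpha_j}\right)_{\!p}.
\]
Because every valuation of $\alpha_j$ is divisible by $p$, the right-hand symbol is trivial, so $\left(\frac{\alpha_j}{\ell}\right)_p=1$. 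Using $\ell\mid x$, one has $\alpha_j\equiv\zeta^{m_j}y\pmod{\ell}$ for some exponent $m_j$, whence $\left(\frac{\zeta}{\ell}\right)_p^{m_j}\left(\frac{y}{\ell}\right)_p=1$. Comparing these relations across the indices $j$ eliminates the factor $\left(\frac{y}{\ell}\right)_p$ and isolates the triviality of $\left(\frac{\zeta}{\ell}\right)_p$; the complementary law then identifies this with $\zeta$ being a $p$th power residue modulo $\ell$, which forces the Fermat quotient $q_\ell=(\ell^{p-1}-1)/p$ to vanish modulo $p$, i.e.\ $\ell^{p-1}\equiv 1\pmod{p^2}$. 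Taking $\ell=2$ completes the proof.

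The hard part is this last step: establishing Eisenstein reciprocity and carrying out the symbol computation cleanly. The delicate points are the normalization of $x+\zeta y$ to a primary element, the behaviour of the symbols at the ramified prime $\lambda$, the rigorous cancellation that isolates $\left(\frac{\zeta}{\ell}\right)_p$ from the nuisance factors, and the precise identification of this complementary symbol with the Fermat quotient — which is exactly what converts the divisibility of an ideal into the arithmetic congruence $2^{p-1}\equiv 1\pmod{p^2}$.
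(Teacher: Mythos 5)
The paper does not prove this statement at all: it is quoted as Wieferich's classical criterion, with a citation to the 1909 paper, so there is no internal proof to compare against. Measured against the literature, what you have sketched is not Wieferich's original argument (a long elementary computation with Kummer--Mirimanoff-type congruences) but Furtw\"angler's 1912 proof via Eisenstein reciprocity, which is indeed the standard modern route and is sound in outline. It even yields more than the statement asks: in a first-case solution with $x,y,z$ pairwise coprime, \emph{every} prime $\ell$ dividing $xyz$ satisfies $\ell^{p-1}\equiv 1\pmod{p^2}$, and parity ($2\mid xyz$) then gives the theorem. Your final step also connects nicely to the paper's own notation: the supplementary law gives $\left(\frac{\zeta}{\ell}\right)_p=\zeta^{\,q_p(\ell)}$, where $q_p(\ell)$ is exactly the Fermat quotient of Section~4, so triviality of the symbol is literally $p\mid q_p(\ell)$.

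Two substantive caveats before this counts as a proof rather than a plan. First, the ``rigorous cancellation'' you flag as the hard part is where the first-case hypothesis enters a second time (the first being the coprimality of the ideals $(x+\zeta^j y)$, which needs $p\nmid z$), and it must be carried out explicitly: writing $\alpha_j=\zeta^{s_j}(x+\zeta^j y)$ for the primary associate, the primarity condition modulo $\lambda^2$ forces $s_j\equiv -jy/(x+y)\pmod p$, hence $m_j=s_j+j\equiv jx/(x+y)\pmod p$; the relations $\left(\frac{\zeta}{\ell}\right)_p^{m_j}\left(\frac{y}{\ell}\right)_p=1$ at $j=0$ and $j=1$ then give $\left(\frac{y}{\ell}\right)_p=1$ and $\left(\frac{\zeta}{\ell}\right)_p^{x/(x+y)}=1$, and it is precisely $p\nmid x$ that makes the exponent invertible modulo $p$ and isolates $\left(\frac{\zeta}{\ell}\right)_p=1$. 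This is not a formality: if the exponents $m_j$ could coincide modulo $p$, the whole argument would evaporate, so the proof is incomplete until this computation is displayed. Second, a slip in your aside on Kummer: the obstruction to concluding that $x+\zeta y$ is a unit times a $p$th power is that $p$ \emph{may} divide the class number, not that it ``need not''; when $p\nmid h$ the descent from $\mathfrak c_j^p$ principal to $\mathfrak c_j$ principal does work, and that is Kummer's regular-prime argument. Citing Eisenstein reciprocity itself as a black box is legitimate (it is proved, e.g., in Ireland--Rosen, Chapter~14), but you should be explicit that it, together with its supplement for $\zeta$, is the one deep input being imported.
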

In addition to the pure interest in the distribution of primes, historically, one of the motivations to resolve Conjectures~\ref{conj:regular} and \ref{conj:non-Wieferich} can be attributed to Fermat's last theorem.
If Conjecture~\ref{conj:non-Wieferich} is true, then it ensures that the first case of Fermat's last theorem holds for infinitely many prime exponents.
Similarly, if Conjecture~\ref{conj:regular} is true, then it ensures that Fermat's last theorem holds for infinitely many prime exponents.
It should be noted that for the first case, it has been shown by Adleman--Heath-Brown~\cite{Adleman-Heath-Brown} to hold for infinitely many prime exponents without relying on Conjecture~\ref{conj:non-Wieferich}.
As is well known, since Fermat's last theorem was fully resolved by Wiles~\cite{Wiles}, this motivation can be said to have vanished.

In this decade, there has been a focus on the study of finite multiple zeta values and Conjecture~\ref{conj:regular} is related to the non-vanishing nature of these values.
Furthermore, in this note, we define a generalization of finite multiple zeta values to general levels and explore their relationship with Conjecture~\ref{conj:non-Wieferich}.
From the study of these values, a new and compelling motivation has arisen to resolve Conjectures~\ref{conj:regular} and \ref{conj:non-Wieferich} after the work by Wiles.
In Section~\ref{sec:problems}, we discuss problems that are believed to be resolved before these conjectures.
\section{Finite multiple zeta values of level $N$}\label{sec:FMZVL}
\subsection{Finite multiple zeta values}
The finite multiple zeta values, which are analogues of multiple zeta values (periods of mixed Tate motives over $\ZZ$), were defined as elements of the ring 
\[
\cA\coloneqq\left.\left(\prod_{p\in\PP}\ZZ/p\ZZ\right) \right/ \left(\bigoplus_{p\in\PP}\ZZ/p\ZZ\right)
\]
by Kaneko--Zagier.
Here, $\PP$ denotes the set of all prime numbers.
This ring $\cA$, termed the ring of integers modulo infinitely large primes, is not an integral domain but is a reduced ring.
Through the diagonal embedding, $\cA$ possesses a $\QQ$-algebra structure.
Elements of $\mathcal{A}$ are actually determined as equivalence classes, but they are often represented by their representative elements $(a_p)_{p\in\PP}$ in $\prod_{p\in\PP}\ZZ/p\ZZ$.
In this context, even if $a_p$ is not defined for a finite number of primes $p$, it remains well-defined as an element of $\mathcal{A}$ and poses no problem.
We call a tuple of positive integers $\bk=(k_1,\dots,k_r)$ an \emph{index}.
Its \emph{depth} is given by $r$, and its \emph{weight} by $k_1+\cdots+k_r$.
We consider the empty set $\emp$ as an index with both depth and weight being $0$, and refer to it as the \emph{empty index}.
For each prime $p$ and non-empty index $\bk=(k_1,\dots, k_r)$, we define $\zeta_p(\bk)\in\ZZ_{(p)}$ by
\[
\zeta_{p}(\bk)\coloneqq\sum_{0<m_1<\cdots<m_r<p}\frac{1}{m_1^{k_1}\cdots m_r^{k_r}}.
\]
Using the natural isomorphism $\ZZ_{(p)}/p\ZZ_{(p)}\simeq\ZZ/p\ZZ$, we then define the \emph{finite multiple zeta value}  $\zeta^{}_{\cA}(\bk)\in\cA$ by $\zeta^{}_{\cA}(\bk)\coloneqq(\zeta_{p}(\bk)\bmod{p})_{p\in\PP}$.
Put $\zeta^{}_{\cA}(\emp)\coloneqq1$.
There are many $\QQ$-linear relations among finite multiple zeta values.
For example, the relation
\begin{equation}\label{eq:example-relation}
2\zeta^{}_{\cA}(1,2,3)+\zeta^{}_{\cA}(1,2,1,2)+\zeta^{}_{\cA}(1,2,2,1)+\zeta^{}_{\cA}(1,1,1,3)=0
\end{equation}
holds, as explained in \cite[Section~3.7]{Saito}.
For an index $\bk$ of weight $k$, we shall refer to $\zeta^{}_{\cA}(\bk)$ as the finite multiple zeta value of weight $k$. 
For a non-negative integer $k$, we denote the $\QQ$-vector space spanned by all finite multiple zeta values of weight $k$ as $\cZ_{\cA,k}$, and set $\cZ_{\cA} \coloneqq \sum_{k=0}^{\infty} \cZ_{\cA,k}$.
By the so-called harmonic (stuffle) product formula, it is understood that $\cZ_{\cA}$ is a $\QQ$-subalgebra of $\cA$.
Research on the algebraic structure of $\cZ$, the $\QQ$-subalgebra of $\RR$ spanned by all multiple zeta values, has been ongoing for many years.
Similarly, the investigation of the algebraic structure of $\cZ_{\cA}$ is both fascinating and extensive.
Furthermore, a surprising algebraic isomorphism $\cZ_{\cA} \simeq \cZ/\zeta(2)\cZ$ is conjectured to exist, which is known as the Kaneko--Zagier conjecture.
This implies a connection between ``prime numbers'' and ``zeta'' in a form entirely different from the Euler product formula, making it a highly intriguing conjecture.
For a detailed correspondence, refer to \cite[Conjecture~9.5]{Kaneko}.
\subsection{Finite multiple zeta values of level $N$}
Recently, the finite multiple zeta value of level two, $\zeta_{\cA}^{(2)}(\bk)$, was introduced by Kaneko, Murakami, and Yoshihara in \cite{Kaneko-Murakami-Yoshihara}.
In this context, we define finite multiple zeta values of higher levels.
Let $N$ be a positive integer.
Elements of $\ZZ/N\ZZ$ are considered as equivalence classes with respect to the equivalence relation of congruence modulo $N$ in the usual way.
Furthermore, the equivalence class to which an integer $a$ belongs will be denoted by $\overline{a}$.
For each prime $p$, non-empty index $\bk=(k_1,\dots,k_r)$, and $\ba\coloneqq(\alpha_1,\dots,\alpha_r)\in(\ZZ/N\ZZ)^r$, we define $\zeta_{p,N}^{\ba}(\bk)\in\ZZ_{(p)}$ as follows:
\[
\zeta_{p,N}^{\ba}(\bk)\coloneqq\sum_{\substack{0<m_1<\cdots<m_r < p\\ m_i\in\alpha_i \text{ for all } 1\leq i\leq r}}\frac{1}{m_1^{k_1}\cdots m_r^{k_r}}.
\]
For the case $\bk=\emp$, there is only one element of $(\ZZ/N\ZZ)^r$ ($r=0$), which we denote as $\bullet$.
For convenience, we set $\zeta_{p,N}^{\bullet}(\emp)\coloneqq 1$ and for every index $\bk$, $\zeta_{p,N}^{\boxtimes}(\bk)\coloneqq 0$.
\begin{definition}
We call a map $c\colon(\ZZ/N\ZZ)^{\times}\to(\ZZ/N\ZZ)^r\cup\{\boxtimes\}$ a \emph{color map}.
We define $\zeta_{\cA,N}^{c}(\bk)\in\cA$, referred to as a \emph{finite multiple zeta value of level $N$ with color map $c$}, by $\zeta_{\cA,N}^{c}(\bk)\coloneqq(\zeta_{p,N}^{c(\overline{p})}(\bk)\bmod{p})_{p\in\PP}$.
\end{definition}
Note that since there are only finitely many prime numbers $p$ for which $c(\overline{p})$ is not defined, $\zeta_{\cA,N}^{c}(\bk)\in\cA$ is well-defined.
These values are finite analogues of the multiple zeta values of level $N$ with color defined by Yuan--Zhao in \cite[Section~2]{Yuan-Zhao}.
\begin{remark}
After the original draft of this paper was completed, Masataka Ono kindly informed the author of analogues of finite multiple zeta values defined by Tasaka using $\zeta_{p,N}^{\ba}(\bk)$ in \cite[Section~6.4]{Tasaka}.
While these values are defined in modified rings, not in the ring $\cA$, they differ from ours but are closely related.
Regardless, further research on these values is expected in the future.
\end{remark}
For later use, we give the following definition here: for each integer $j$ satisfying $0\leq j<N$, we define a map $[j]$ by $[j](\alpha)\coloneqq(-j\alpha,\dots,-j\alpha)\in(\ZZ/N\ZZ)^r$ for any $\alpha\in(\ZZ/N\ZZ)^{\times}$.
Although this definition depends on both $N$ and $r$, it poses no issues when used in contexts where $N$ and $r$ are specified, as in $\zeta_{\cA,N}^{[j]}(\bk)$.
When $r=0$, $[j](\alpha)=\bullet$.
For an index $\bk$ of weight $k$, let $\zeta_{\cA}^{(N)}(\bk)\coloneqq N^k\zeta_{\cA,N}^{[0]}(\bk)$.
Then, we have $\zeta_{\cA}^{(1)}(\bk)=\zeta_{\cA}^{}(\bk)$ and $\zeta_{\cA}^{(2)}(\bk)$ coincides with the value defined in \cite{Kaneko-Murakami-Yoshihara}.

For each non-negative integer $k$, we define a $\QQ$-vector space $\cZ_{\cA,k}(N)$ by
\[
\cZ_{\cA,k}(N)\coloneqq\sum_{\bk: \text{ index of weight }k}\sum_{c\colon(\ZZ/N\ZZ)^{\times}\to(\ZZ/N\ZZ)^{\text{depth of }\bk}\cup\{\boxtimes\}}\QQ\cdot\zeta_{\cA,N}^{c}(\bk).
\]
We set $\cZ_{\cA}(N)\coloneqq\sum_{k=0}^{\infty}\cZ_{\cA,k}(N)$.
This $\cZ_{\cA}(N)$ is the space spanned by all finite multiple zeta values of level $N$.
A natural generalization of the harmonic product formula holds, and it should be almost obvious that $\cZ_{\cA}(N)$ possesses the structure of a $\QQ$-subalgebra of $\cA$ such that $\cZ_{\cA,k_1}(N)\cdot\cZ_{\cA,k_2}(N)\subset\cZ_{\cA,k_1+k_2}(N)$.
For example, we have
\begin{multline*}
\zeta_{\cA,N}^f(k_1,k_2)\zeta_{\cA,N}^g(l)=\zeta_{\cA,N}^{h_1}(l,k_1,k_2)+\zeta_{\cA,N}^{h_2}(k_1,l,k_2)+\zeta_{\cA,N}^{h_3}(k_1,k_2,l)\\
+\zeta_{\cA,N}^{h_4}(k_1+l,k_2)+\zeta_{\cA,N}^{h_5}(k_1,k_2+l),
\end{multline*}
where for each $\alpha\in(\ZZ/N\ZZ)^{\times}$, if $f(\alpha)$ or $g(\alpha)$ equals $\boxtimes$, we set $h_1(\alpha)=\cdots =h_5(\alpha)\coloneqq\boxtimes$; otherwise, writing $f(\alpha)=(f_1(\alpha), f_2(\alpha))\in (\ZZ/N\ZZ)^2$, we define
\begin{align*}
&h_1(\alpha)\coloneqq(g(\alpha),f_1(\alpha),f_2(\alpha)), h_2(\alpha)\coloneqq(f_1(\alpha),g(\alpha),f_2(\alpha)),
h_3(\alpha)\coloneqq(f_1(\alpha),f_2(\alpha),g(\alpha)),\\
&h_4(\alpha)\coloneqq\begin{cases}f(\alpha) & \text{if } f_1(\alpha)=g(\alpha), \\ \boxtimes & \text{otherwise}\end{cases}, h_5(\alpha)\coloneqq\begin{cases}f(\alpha) & \text{if } f_2(\alpha)=g(\alpha), \\ \boxtimes & \text{otherwise.}\end{cases}
\end{align*}

Not only the harmonic product formula, but we also provide natural generalizations of the reversal formula for finite multiple zeta values (Hoffman~\cite[Theorem~4.5]{Hoffman}, Zhao~\cite[Lemma~3.3]{Zhao}; see also \cite[Proposition~2.6]{Saito}) and a formula
\[
\zeta^{}_{\cA}(k_1,\dots,k_r)=\sum_{i=0}^r(-1)^{k_{i+1}+\cdots+k_r}\zeta_{\cA}^{(2)}(k_1,\dots,k_i)\zeta_{\cA}^{(2)}(k_r,\dots,k_{i+1})
\]
obtained by Kaneko, Murakami, and Yoshihara~\cite[(6)]{Kaneko-Murakami-Yoshihara}.
\begin{lemma}\label{lem:j-sum}
Let $\bk=(k_1,\dots,k_r)$ be an index of weight $k$.
Let $j$ be an integer satisfying $0\leq j<N$.
Then, 
\[
\left(\sum_{\frac{jp}{N}<m_1<\cdots<m_r<\frac{(j+1)p}{N}}\frac{1}{m_1^{k_1}\cdots m_r^{k_r}}\bmod{p}\right)_{p\in\PP}=N^k\zeta_{\cA,N}^{[j]}(\bk)
\]
holds in $\cA$.
\end{lemma}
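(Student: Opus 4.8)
The plan is to fix a prime $p$ with $p\nmid N$ and establish the single congruence
\[
\sum_{\frac{jp}{N}<m_1<\cdots<m_r<\frac{(j+1)p}{N}}\frac{1}{m_1^{k_1}\cdots m_r^{k_r}}\equiv N^k\,\zeta_{p,N}^{[j](\overline{p})}(\bk)\pmod{p}
\]
in $\ZZ_{(p)}$. Since this accounts for every prime except the finitely many dividing $N$, and finitely many components are irrelevant for elements of $\cA$, the identity in $\cA$ follows at once from the definitions of $\zeta_{\cA,N}^{[j]}(\bk)$ and $[j]$.

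First I would introduce the change of variables $n_i\coloneqq N m_i-jp$. The double inequality $\frac{jp}{N}<m_i<\frac{(j+1)p}{N}$ is equivalent to $0<n_i<p$, and since $n_i=Nm_i-jp\equiv -jp\pmod{N}$, each $n_i$ automatically lies in the residue class $-j\overline{p}$ modulo $N$. Conversely, every integer $n$ with $0<n<p$ and $n\equiv -jp\pmod{N}$ comes from the unique integer $m=(n+jp)/N$ in the prescribed interval, so $m_i\mapsto n_i$ is a bijection from the index set of the left-hand sum onto $\{\,0<n_1<\cdots<n_r<p : n_i\equiv -jp\ (\mathrm{mod}\ N)\,\}$. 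Because $m\mapsto Nm-jp$ is strictly increasing, the orderings $m_1<\cdots<m_r$ and $n_1<\cdots<n_r$ correspond, so the strict-inequality constraints match. The endpoints cause no trouble: for $p\nmid N$ the numbers $jp/N$ and $(j+1)p/N$ are interior integers only in the edge cases $j=0$ and $j=N-1$, where they degenerate to the strict bounds $0$ and $p$.

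Next comes the reduction modulo $p$. As $p\nmid N$, the integer $N$ is invertible in $\ZZ_{(p)}$, and from $Nm_i=n_i+jp$ we obtain $m_i\equiv N^{-1}n_i\pmod{p}$, hence $m_i^{-k_i}\equiv N^{k_i}n_i^{-k_i}\pmod{p}$. Multiplying over $i=1,\dots,r$ and using $k_1+\cdots+k_r=k$ converts each summand $1/(m_1^{k_1}\cdots m_r^{k_r})$ into $N^k/(n_1^{k_1}\cdots n_r^{k_r})$ modulo $p$. Summing through the bijection factors out $N^k$ and leaves exactly $\sum 1/(n_1^{k_1}\cdots n_r^{k_r})$ over the residue-restricted tuples, which by definition is $\zeta_{p,N}^{[j](\overline{p})}(\bk)$; this gives the displayed congruence.

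There is no serious obstacle; the whole argument is the order-preserving substitution $n_i=Nm_i-jp$. The only points requiring care are checking that this substitution is a genuine bijection onto the correctly residue-restricted index set (including the boundary behaviour at $j=0$ and $j=N-1$) and tracking the factors $N^{k_i}$ coming from each inversion $m_i\equiv N^{-1}n_i$ so that they assemble into the single global factor $N^k$.
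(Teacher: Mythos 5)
Your proof is correct and is essentially the same argument as the paper's: the paper performs the substitution in two steps ($m_i'=Nm_i$, extracting $N^k$ as an exact identity, then the shift $n_i=m_i'-jp$ as a congruence mod $p$), while you compose them into the single affine map $n_i=Nm_i-jp$ and recover the factor $N^k$ from $m_i\equiv N^{-1}n_i\pmod{p}$. This is a cosmetic repackaging, not a different route; both proofs rest on the same bijection onto the residue class $-j\overline{p}$ and discard the finitely many primes dividing $N$.
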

\begin{proof}
Take $\alpha\in(\ZZ/N\ZZ)^{\times}$ and  a prime $p$ belonging to $\alpha$.
By putting $m_i'=Nm_i$, we have
\[
\sum_{\frac{jp}{N}<m_1<\cdots<m_r<\frac{(j+1)p}{N}}\frac{1}{m_1^{k_1}\cdots m_r^{k_r}}=N^k\sum_{\substack{jp<m'_1<\cdots<m'_r<(j+1)p \\ m'_i \ \equiv \ 0 \! \! \pmod{N} \text{ for all } 1\leq i\leq r}}\frac{1}{{m'}_1^{k_1}\cdots{m'}_r^{k_r}}
\]
and by putting $n_i=m'_i-jp$, we have
\[
\sum_{\substack{jp<m'_1<\cdots<m'_r<(j+1)p \\ m'_i \ \equiv \ 0 \! \! \pmod{N} \text{ for all } 1\leq i\leq r}}\frac{1}{{m'}_1^{k_1}\cdots{m'}_r^{k_r}}\equiv \sum_{\substack{0<n_1<\cdots<n_r<p \\ n_i\in-j\alpha \text{ for all } 1\leq i\leq r}}\frac{1}{{n}_1^{k_1}\cdots{n}_r^{k_r}}\pmod{p}.
\]
This gives a proof.
\end{proof}
\begin{proposition}\label{prop:reversal}
Let $\bk=(k_1,\dots,k_r)$ be an index of weight $k$.
\begin{enumerate}[$i)$]
\item Let $\overleftarrow{\bk}\coloneqq(k_r,\dots,k_1)$.
For a color map $c$, we define $\overleftarrow{c}$ as follows$:$ When $c(\alpha)=(\alpha_1,\dots,\alpha_r)\in(\ZZ/N\ZZ)^r$, set $\overleftarrow{c}(\alpha)\coloneqq(\alpha-\alpha_r,\dots,\alpha-\alpha_1)$, and when $c(\alpha)=\boxtimes$, set $\overleftarrow{c}(\alpha)\coloneqq\boxtimes$.
Then, we have
\[
\zeta_{\cA,N}^{c}(\bk)=(-1)^{k}\zeta_{\cA,N}^{\overleftarrow{c}}(\overleftarrow{\bk}).
\]
\item
For each tuple of integers $(i_1,\dots,i_{N-1})$ satisfying $0\leq i_1\leq\cdots\leq i_{N-1}\leq r$ and each integer $j$ satisfying $0\leq j<N$, set $\bk_{(i_1,\dots,i_{N-1});j}\coloneqq(k_{i_j+1},\dots,k_{i_{j+1}})$.
Here, we set $i_0\coloneqq0$, $i_N\coloneqq r$, and if $i_j = i_{j+1}$, then $\bk_{(i_1, \dots, i_{N-1});j} = \emp$.
Then, we have
\[
\zeta^{}_{\cA}(\bk)=N^k\sum_{0\leq i_1\leq\cdots\leq i_{N-1}\leq r}\prod_{j=0}^{N-1}\zeta_{\cA,N}^{[j]}(\bk_{(i_1,\dots,i_{N-1});j}).
\]
\end{enumerate}
\end{proposition}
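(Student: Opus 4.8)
The plan is to establish both identities primewise, by a substitution in the defining power sum, and then pass to $\cA$, where the finitely many exceptional primes are irrelevant.

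For part $i)$, fix $\alpha\in(\ZZ/N\ZZ)^{\times}$ and a prime $p$ belonging to $\alpha$ (all but finitely many $p$ are of this form). If $c(\alpha)=\boxtimes$, both sides vanish for this $p$ by the convention $\zeta_{p,N}^{\boxtimes}=0$ together with $\overleftarrow{c}(\alpha)=\boxtimes$, so assume $c(\alpha)=(\alpha_1,\dots,\alpha_r)$. In the sum defining $\zeta_{p,N}^{c(\alpha)}(\bk)$ I would substitute $n_i\coloneqq p-m_{r+1-i}$; this reverses the order so that $0<n_1<\cdots<n_r<p$ and turns $\tfrac{1}{m_{r+1-i}}$ into $\tfrac{1}{p-n_i}\equiv-\tfrac{1}{n_i}\pmod{p}$, producing the global sign $(-1)^{k_1+\cdots+k_r}=(-1)^{k}$ and the reversed exponent string $\overleftarrow{\bk}$. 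The one new point beyond the classical reversal formula is the color condition: from $m_{r+1-i}\equiv\alpha_{r+1-i}$ and $p\equiv\alpha\pmod{N}$ one gets $n_i\equiv\alpha-\alpha_{r+1-i}\pmod{N}$, which is exactly the $i$-th entry of $\overleftarrow{c}(\alpha)=(\alpha-\alpha_r,\dots,\alpha-\alpha_1)$. Reading off both sides yields $\zeta_{p,N}^{c(\alpha)}(\bk)\equiv(-1)^{k}\zeta_{p,N}^{\overleftarrow{c}(\alpha)}(\overleftarrow{\bk})\pmod{p}$, and collecting over $p$ gives the claim in $\cA$.

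For part $ii)$, I would cut the range of $\zeta_p(\bk)$ into the $N$ open blocks $\left(\tfrac{jp}{N},\tfrac{(j+1)p}{N}\right)$ for $0\le j<N$. For every prime $p>N$ the points $\tfrac{jp}{N}$ with $1\le j\le N-1$ are non-integers, so these blocks partition the integers of $(0,p)$; the remaining finitely many primes do not affect the element of $\cA$. Since $m_1<\cdots<m_r$, the block containing $m_l$ is non-decreasing in $l$, so each admissible configuration is encoded by a unique tuple $0\le i_1\le\cdots\le i_{N-1}\le r$ in which the variables lying in block $j$ are exactly $m_{i_j+1},\dots,m_{i_{j+1}}$. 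As any variable in block $j$ is automatically smaller than any variable in block $j+1$, the cross-block ordering constraints are free and the sum factorizes over blocks:
\[
\zeta_p(\bk)=\sum_{0\le i_1\le\cdots\le i_{N-1}\le r}\ \prod_{j=0}^{N-1}\ \sum_{\frac{jp}{N}<m_{i_j+1}<\cdots<m_{i_{j+1}}<\frac{(j+1)p}{N}}\ \prod_{l=i_j+1}^{i_{j+1}}\frac{1}{m_l^{k_l}}.
\]

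It then remains to apply Lemma~\ref{lem:j-sum} to each inner block-sum: after relabelling the variables of block $j$, it is precisely the sum attached to the sub-index $\bk_{(i_1,\dots,i_{N-1});j}$, so modulo $p$ it represents $N^{w_j}\zeta_{\cA,N}^{[j]}(\bk_{(i_1,\dots,i_{N-1});j})$, where $w_j$ denotes the weight of that sub-index. Because the sub-indices partition $\bk$, we have $\sum_{j=0}^{N-1}w_j=k$, so the factors of $N$ combine into a single $N^{k}$ that pulls out of the whole sum, giving the asserted formula. I expect the genuinely load-bearing step to be this interval decomposition: one must confirm that the blocks really partition the integers of $(0,p)$ for large $p$ (non-integrality of the $\tfrac{jp}{N}$) and that the constraint $m_1<\cdots<m_r$ factorizes cleanly across blocks before Lemma~\ref{lem:j-sum} can be invoked term by term. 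Part $i)$, by contrast, is a direct computation once the transformation of the color condition is recorded.
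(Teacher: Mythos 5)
Your proof is correct and takes essentially the same route as the paper: part $i)$ by the substitution $n_i=p-m_{r+1-i}$, and part $ii)$ by partitioning $(0,p)$ into the blocks $\left(\tfrac{jp}{N},\tfrac{(j+1)p}{N}\right)$ (valid for all $p\nmid N$) and invoking Lemma~\ref{lem:j-sum} on each block. You merely fill in details the paper leaves implicit, namely the transformation of the color condition under reversal and the bookkeeping of the powers of $N$.
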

\begin{proof}
The first formula can be proved using a standard substitution trick $n_i=p-m_{r+1-i}$.
For the second one, it suffices to use the partitioning of the sum range
\[
\sum_{0<m_1<\cdots<m_r<p}=\sum_{0\leq i_1\leq\cdots\leq i_{N-1}\leq r}\sum_{\substack{0<m_1<\cdots<m_{i_1}<\frac{p}{N} \\ \frac{p}{N}<m_{i_1+1}<\cdots<m_{i_2}<\frac{2p}{N} \\ \cdots \\ \frac{(N-1)p}{N}<m_{i_{N-1}+1}<\cdots<m_r<p }}
\]
and Lemma~\ref{lem:j-sum}.
Note that if a prime $p$ does not divide $N$, then for any $j = 1, 2,\dots , N-1$, $jp/N$ is not an integer, and therefore, the above partitioning holds true for all but a finite number of primes $p$.
\end{proof}
The following basic fact will be used later in the proof of Proposition~\ref{prop:submain}.
\begin{lemma}\label{lem:levels}
Let $N$ be a positive integer and $M$ a positive multiple of $N$.
Then, for any non-negative integer $k$, $\cZ_{\cA,k}(N)\subset\cZ_{\cA,k}(M)$ holds.
\end{lemma}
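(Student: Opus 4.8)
The plan is to refine the congruence conditions modulo $N$ into finer congruence conditions modulo $M$ and then repackage those conditions as level-$M$ color maps. Write $M=Nd$ with $d=M/N$ a positive integer, and let $\pi\colon\ZZ/M\ZZ\to\ZZ/N\ZZ$ be the canonical reduction homomorphism; since $\pi$ sends units to units, it restricts to a map $(\ZZ/M\ZZ)^{\times}\to(\ZZ/N\ZZ)^{\times}$. It suffices to show that each spanning element $\zeta_{\cA,N}^{c}(\bk)$ of $\cZ_{\cA,k}(N)$, where $\bk=(k_1,\dots,k_r)$ has weight $k$ and $c$ is a level-$N$ color map, lies in $\cZ_{\cA,k}(M)$.

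First I would observe that, for a single residue $\alpha_i\in\ZZ/N\ZZ$, the condition $m_i\equiv\alpha_i\pmod{N}$ is the disjoint union, over the $d$ elements $\beta_i$ of the fiber $\pi^{-1}(\alpha_i)$, of the conditions $m_i\equiv\beta_i\pmod{M}$. Applying this in each coordinate, for any prime $p$ and any $\ba=(\alpha_1,\dots,\alpha_r)\in(\ZZ/N\ZZ)^r$ one obtains the refinement
\[
\zeta_{p,N}^{\ba}(\bk)=\sum_{\substack{\boldsymbol{\beta}\in(\ZZ/M\ZZ)^r \\ \pi(\boldsymbol{\beta})=\ba}}\zeta_{p,M}^{\boldsymbol{\beta}}(\bk),
\]
where $\pi$ acts coordinatewise and the sum runs over the $d^r$ lifts $\boldsymbol{\beta}$ of $\ba$.

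Next I would encode these lifts as color maps. For each $\alpha\in\ZZ/N\ZZ$ with representative $\tilde\alpha\in\{0,\dots,N-1\}$, enumerate the fiber by the lifts $\overline{\tilde\alpha+tN}\in\ZZ/M\ZZ$ for $t\in\{0,\dots,d-1\}$. For each offset vector $\bt=(t_1,\dots,t_r)\in\{0,\dots,d-1\}^r$, define a level-$M$ color map $c_{\bt}$ by setting $c_{\bt}(\gamma)\coloneqq\boxtimes$ whenever $c(\pi(\gamma))=\boxtimes$, and otherwise, writing $c(\pi(\gamma))=(\alpha_1,\dots,\alpha_r)$, letting the $i$th coordinate of $c_{\bt}(\gamma)$ be $\overline{\tilde\alpha_i+t_iN}\in\ZZ/M\ZZ$. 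Since $c_{\bt}$ depends on $\gamma$ only through $\pi(\gamma)\in(\ZZ/N\ZZ)^{\times}$, it is a well-defined color map on $(\ZZ/M\ZZ)^{\times}$. As $\bt$ ranges over $\{0,\dots,d-1\}^r$, the tuples $c_{\bt}(\gamma)$ run exactly once through all lifts of $c(\pi(\gamma))$, so substituting $\gamma=\overline{p}\in(\ZZ/M\ZZ)^{\times}$ into the refinement above yields, for every prime $p\nmid M$,
\[
\zeta_{p,N}^{c(\overline{p})}(\bk)=\sum_{\bt\in\{0,\dots,d-1\}^r}\zeta_{p,M}^{c_{\bt}(\overline{p})}(\bk).
\]
As only finitely many primes are excluded, this identity holds in $\cA$, giving $\zeta_{\cA,N}^{c}(\bk)=\sum_{\bt}\zeta_{\cA,M}^{c_{\bt}}(\bk)\in\cZ_{\cA,k}(M)$, as desired.

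The two substitutions are routine; the only point demanding care is the bookkeeping of color maps—verifying that each $c_{\bt}$ genuinely factors through the reduction $\pi$, so that it is defined on all of $(\ZZ/M\ZZ)^{\times}$, and that the assignment $\bt\mapsto c_{\bt}$ sets up a bijection between offset vectors and fiber lifts, so that the sum over $\bt$ reconstructs exactly $\zeta_{p,N}^{c(\overline{p})}(\bk)$ with no term counted twice. The degenerate case $r=0$ (hence $k=0$) is immediate, since both $\cZ_{\cA,0}(N)$ and $\cZ_{\cA,0}(M)$ equal $\QQ$.
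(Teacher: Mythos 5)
Your proof is correct and is essentially the paper's own argument: the paper establishes the lemma by exactly your refinement identity, partitioning each congruence class modulo $N$ into the $M/N$ classes $a_i+j_iN$ modulo $M$, and leaves the color-map bookkeeping implicit. One small correction: your justification of the degenerate case $r=0$ is not right---for $N\geq 3$ the space $\cZ_{\cA,0}(N)$ contains nontrivial idempotents (indicators of congruence classes of primes, which by Dirichlet's theorem are neither $0$ nor rational) and is strictly larger than $\QQ$---but this is harmless, since your general construction already covers $r=0$, the sum over offset vectors degenerating to the single color map $c\circ\pi$.
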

\begin{proof}
This easily follows from the following partitioning of the sum range:
\[
\sum_{\substack{0<m_1<\cdots<m_r<p \\ m_i\equiv a_i \! \! \pmod{N} \text{ for all } 1\leq i\leq r}}=\sum_{0\leq j_1,\dots, j_r<\frac{M}{N}}\sum_{\substack{0<m_1<\cdots<m_r<p \\ m_i\equiv a_i+j_iN \! \! \pmod{N} \text{ for all } 1\leq i\leq r}},
\]
where $p$ is a prime number and $a_1,\dots, a_r$ are integers.
\end{proof}
While investigating $\QQ$-linear relations among finite multiple zeta values of level $N$ seems to be an interesting subject, we will not pursue it further in this note.
\subsection{Non-zeroness}
Researchers of finite multiple zeta values face a significant issue.
Namely, not a single finite multiple zeta value corresponding to a non-empty index has been proved to be non-zero.
In other words, the relation \eqref{eq:example-relation} might imply
\[
2\times 0+0+0+0=0.
\]
There is a possibility that all linear relations that researchers of finite multiple zeta values have proved could be trivial!
(It is crucial to note, however, that this only applies when viewing the relations in $\cA$; in most cases, they have indeed shown non-trivial results.)
This is an extremely frustrating situation, but based on the initial conjectures, the following can be said.
\begin{proposition}\label{prop:main}
\begin{enumerate}[$i)$]
\item\label{item:main1}If Conjecture~$\ref{conj:regular}$ is true, then for every positive integer $k$ that is not equal to $1$, $2$, or $4$, there exists at least one non-zero finite multiple zeta value of weight $k$.
\item\label{item:main2}If Conjecture~$\ref{conj:non-Wieferich}$ is true, then for every even positive integer $N$ and every positive integer $k$, there exists at least one non-zero finite multiple zeta value of level $N$ and weight $k$.
\end{enumerate}
\end{proposition}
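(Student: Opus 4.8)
The plan is to exhibit, for each admissible weight, a single nonzero finite multiple zeta value (or a product of such, which again lies in the appropriate weight-graded piece of $\cZ_{\cA}$ or $\cZ_{\cA}(N)$), and to detect its non-vanishing in $\cA$ through a mod-$p$ congruence governed by Bernoulli numbers (for $i)$) or by the Fermat quotient $q_p(2)\coloneqq(2^{p-1}-1)/p$ (for $ii)$). In each case the relevant conjecture supplies infinitely many primes at which the congruence is nonzero, which is exactly what non-vanishing in $\cA$ demands. The two key inputs are: the depth-two congruence $\zeta_p(a,b)\equiv c(a,b)\,B_{p-a-b}\pmod{p}$ with a nonzero rational constant $c(a,b)$ whenever $a+b$ is odd (Hoffman~\cite{Hoffman}, Zhao~\cite{Zhao}), together with Kummer's criterion characterizing regular primes via Bernoulli numbers; and the classical congruence $\sum_{0<m<p/2}1/m\equiv -2q_p(2)\pmod{p}$ of Lerch.

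For $i)$, I would first treat odd weight $k\geq 3$ by taking the single value $\zeta_{\cA}(1,k-1)$. Since $1+(k-1)=k$ is odd, the depth-two congruence gives $\zeta_p(1,k-1)\equiv c\,B_{p-k}\pmod{p}$ with $c\neq 0$. For a regular prime $p>k+1$ the index $p-k$ is even and lies in $[2,p-3]$, so Kummer's criterion (with von Staudt--Clausen for $p$-integrality) forces $B_{p-k}\not\equiv 0\pmod{p}$; under Conjecture~\ref{conj:regular} there are infinitely many such $p$, hence $\zeta_{\cA}(1,k-1)\neq 0$. For even weight $k\geq 6$ the single-Bernoulli mechanism is blocked by parity, since then $p-k$ is odd and $B_{p-k}=0$. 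Instead I would write $k=w_1+w_2$ with $w_1,w_2$ odd and $\geq 3$ and consider the product $\zeta_{\cA}(1,w_1-1)\,\zeta_{\cA}(1,w_2-1)$, which lies in $\cZ_{\cA,k}$ because $\cZ_{\cA}$ is weight-graded. Its $p$-component is $\zeta_p(1,w_1-1)\zeta_p(1,w_2-1)\equiv c_1c_2\,B_{p-w_1}B_{p-w_2}\pmod{p}$, and for a regular prime each factor is nonzero in the field $\ZZ/p\ZZ$, so the product is nonzero there. Thus this element of $\cZ_{\cA,k}$ is nonzero in $\cA$, and since it expands by the harmonic product into a $\QQ$-linear combination of weight-$k$ finite multiple zeta values, at least one of these must be nonzero. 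The decomposition $k=w_1+w_2$ into two odd parts $\geq 3$ is possible exactly when $k$ is even and $k\geq 6$, which dovetails with the exclusion of $k=4$.

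For $ii)$, I would first reduce to level $N=2$: by Lemma~\ref{lem:levels}, for even $N$ one has $\cZ_{\cA,k}(2)\subset\cZ_{\cA,k}(N)$, so it suffices to produce a nonzero level-two value in each weight $k$. Applying Lemma~\ref{lem:j-sum} with $N=2$, $j=0$, $r=1$, $k_1=1$ identifies $\zeta_{\cA}^{(2)}(1)=2\,\zeta_{\cA,2}^{[0]}(1)$ with $\left(\sum_{0<m<p/2}1/m \bmod{p}\right)_{p\in\PP}$, and Lerch's congruence then yields $\zeta_{\cA}^{(2)}(1)=(-2q_p(2)\bmod{p})_{p\in\PP}$. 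Next I would take the $k$-th power $\zeta_{\cA}^{(2)}(1)^{k}\in\cZ_{\cA,k}(2)$, whose $p$-component is $(-2q_p(2))^{k}\bmod{p}$. For a non-Wieferich prime $p$ one has $q_p(2)\not\equiv 0\pmod{p}$, and since $p$ is odd the factor $-2$ is a unit, so $(-2q_p(2))^{k}\not\equiv 0\pmod{p}$. Under Conjecture~\ref{conj:non-Wieferich} there are infinitely many such primes, whence $\zeta_{\cA}^{(2)}(1)^{k}\neq 0$ in $\cA$; expanding the power by the harmonic product exhibits a nonzero level-two finite multiple zeta value of weight $k$, and Lemma~\ref{lem:levels} promotes it to level $N$.

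The main obstacle I anticipate is the even-weight case of $i)$: because a single even-index Bernoulli number cannot arise from a weight-$k$ value when $k$ is even, one is forced to pass to a product of two odd-weight double zeta values and to argue non-vanishing of a product $B_{p-w_1}B_{p-w_2}$ of Bernoulli numbers. This is where regularity is needed in full strength, since all even-index Bernoulli numbers in the relevant range must be simultaneously nonzero mod $p$, and it is also where the exclusion of $k=4$ becomes structurally unavoidable. The remaining points—the non-vanishing of the constants $c(a,b)$ and the $p$-integrality of the Bernoulli numbers involved—are routine consequences of the stated congruence and of von Staudt--Clausen.
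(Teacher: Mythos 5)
Your proof is correct. Part $i)$ follows essentially the same path as the paper: odd weights via the depth-two evaluation \eqref{eq:VHZ} combined with Kummer's criterion, and even weights $k\geq 6$ via a decomposition $k=w_1+w_2$ into odd parts $\geq 3$ together with the harmonic-product expansion of the resulting product; your specific values $\zeta_{\cA}^{}(1,w_i-1)$ are just explicit instances of the paper's $\fZ(w_i)\in\cZ_{\cA,w_i}$. Part $ii)$ is where you genuinely diverge. The paper applies the Skula--Dobson--Ichimura congruence at level $2N$ directly, obtaining \eqref{eq:A-SDI}, concluding that some $\zeta_{\cA,2N}^{[2j]}(1)$ is nonzero, and then invoking the reducedness of $\cA$ to pass to $k$-th powers. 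You instead work at level $2$ with the congruence $\sum_{0<m<p/2}1/m\equiv-2q_p(2)\pmod{p}$ (which the paper attributes to Eisenstein rather than Lerch; Lerch's congruence in the paper is the general-$N$ statement, equivalent to this one when $N=2$, so this is only an attribution slip, not an error), take the explicit power $\zeta_{\cA}^{(2)}(1)^k$ whose $p$-components $(-2q_p(2))^k$ you control directly --- so you never need reducedness of $\cA$ --- and then promote the resulting nonzero element of $\cZ_{\cA,k}(2)\subset\cZ_{\cA,k}(N)$ to level $N$ via Lemma~\ref{lem:levels}. This is precisely the alternative route that the paper's remark following Proposition~\ref{prop:submain} acknowledges as sufficient (Eisenstein plus Lemma~\ref{lem:levels}, with SDI not actually needed). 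What each approach buys: yours is more economical, using only the classical level-$2$ congruence and giving componentwise control; the paper's SDI route yields the stronger structural fact \eqref{eq:A-SDI} that the nonzero value can be found among values whose color maps have the restricted form $[2j]$ at level $2N$ itself, rather than merely somewhere in the span $\cZ_{\cA,k}(N)$.
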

It is known that $\cZ_{\cA,1}=\cZ_{\cA,2}=\cZ_{\cA,4}={0}$ (see \cite{Saito} for details).
The proofs of $\ref{item:main1})$ and $\ref{item:main2})$ will be given in Section~\ref{sec:Bernoulli} and Section~\ref{sec:Fermat}, respectively.
For a result regarding odd levels greater than two see Proposition~\ref{prop:submain}.
\section{Bernoulli--Seki numbers}\label{sec:Bernoulli}
For a non-negative integer $n$, $B_n$ denotes the $n$th \emph{Bernoulli--Seki number};
\[
\frac{te^t}{e^t-1}=\sum_{n=0}^{\infty}\frac{B_n}{n!}t^n.
\]
Here, the inclusion of ``Seki'' in the name emphasizes that Takakazu Seki independently discovered this sequence alongside Jacob Bernoulli.
Kummer's criterion asserts that $p\geq 5$ is regular if and only if $p$ does not divide any of $B_2$, $B_4$, $\dots$, $B_{p-3}$.
Thus, Conjecture~\ref{conj:regular} implies the following.
\begin{conjecture}\label{conj:non-Wolstenholme}
Let $k\geq 3$ be an odd integer.
Then, there exist infinitely many primes $p$ greater than $k$ such that $p$ does not divide $B_{p-k}$.
\end{conjecture}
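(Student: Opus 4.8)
The plan is to obtain the implication ``Conjecture~\ref{conj:regular} $\Rightarrow$ Conjecture~\ref{conj:non-Wolstenholme}'' as a direct consequence of the Kummer's criterion recalled just above, the only substantive work being a short parity-and-range bookkeeping. First I fix an odd integer $k\geq 3$ and consider an arbitrary regular prime $p>k$. Since regular primes are odd by definition and $p>k\geq 3$, we have $p\geq 5$; and because both $p$ and $k$ are odd, the difference $p-k$ is a \emph{positive even} integer. The hypothesis $k\geq 3$ gives $p-k\leq p-3$, while $p>k$ together with the parity forces $p-k\geq 2$; hence
\[
p-k\in\{2,4,\dots,p-3\}.
\]

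Next I invoke the one-directional half of the Kummer's criterion: a regular prime $p\geq 5$ divides none of $B_2,B_4,\dots,B_{p-3}$. Since $p-k$ is even and lies in exactly this range, it follows at once that $p\nmid B_{p-k}$. Thus \emph{every} regular prime $p>k$ is a prime for which $p$ does not divide $B_{p-k}$.

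Finally, assuming Conjecture~\ref{conj:regular}, there are infinitely many regular primes, and at most finitely many of them are $\leq k$; discarding those still leaves infinitely many regular primes $p>k$, each of which satisfies $p\nmid B_{p-k}$ by the previous step. This yields Conjecture~\ref{conj:non-Wolstenholme} under the stated hypothesis.

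I do not expect a genuine obstacle in this argument: its entire content is the observation that, for odd $k\geq 3$ and odd $p>k$, the even index $p-k$ automatically falls inside the Kummer range $\{2,\dots,p-3\}$, so the relevant implication of the Kummer's criterion applies verbatim. The only point requiring a moment's care is the parity/range verification above, and the sole genuine difficulty---the infinitude of regular primes---has been quarantined into the hypothesis, which is exactly why the statement is formulated conditionally rather than as an unconditional theorem.
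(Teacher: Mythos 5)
Your argument is correct and is exactly the reasoning the paper relies on: the paper derives this conjecture from Conjecture~\ref{conj:regular} via Kummer's criterion in a single sentence, and your parity-and-range bookkeeping (for odd $p>k\geq 3$ the even index $p-k$ lies in $\{2,4,\dots,p-3\}$, so a regular prime cannot divide $B_{p-k}$) is precisely the verification left implicit there.
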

Let $k\geq 2$ be an integer, and define an element of $\cA$ from the Bernoulli--Seki numbers as follows:
\[
\fZ(k)\coloneqq\left(\frac{B_{p-k}}{k}\bmod{p}\right)_{p\in\PP}.
\]
If $k$ is even, then $\fZ(k)=0$.
If Conjecture~\ref{conj:non-Wolstenholme} is true, then for odd $k\geq 3$, we have $\fZ(k)\neq0$.
Here, regarding the special values of finite multiple zeta values, the following is well-known (Vandiver, Hoffman, and Zhao~\cite[Theorem~6.1]{Hoffman}, \cite[Theorem~3.1]{Zhao}; see also \cite[(7.2)]{Kaneko}, \cite[Proposition~2.3]{Saito}):
\begin{equation}\label{eq:VHZ}
\zeta^{}_{\cA}(k_1,k_2)=(-1)^{k_2}\binom{k_1+k_2}{k_1}\fZ(k_1+k_2), \quad k_1, k_2\geq 1.
\end{equation}
Thus, the validity of Proposition~\ref{prop:main}~$\ref{item:main1})$ for the case where the weight is an odd integer greater than one is established.
This has been pointed out in several references and is likely well-known (cf.~\cite{Kaneko1}).
In the following, we will discuss the case where the weight is an even integer greater than four.
\begin{proof}[Proof of Proposition~$\ref{prop:main}~\ref{item:main1})$]
Let $k\geq 6$ be an even integer.
In this case, there exist odd integers $k_1$ and $k_2$, both greater than one, such that we can decompose $k$ as $k=k_1+k_2$.
If $p$ is a regular prime greater than both $k_1$ and $k_2$, then by Kummer's criterion, $p$ does not divide either $B_{p-k_1}$ or $B_{p-k_2}$.
Hence, if Conjecture~\ref{conj:regular} is true, we have $\fZ(k_1)\fZ(k_2)\neq0$.
From \eqref{eq:VHZ}, $\fZ(k_1)\in\cZ_{\cA,k_1}$ and $\fZ(k_2)\in\cZ_{\cA,k_2}$.
By the harmonic product formula, $\fZ(k_1)\fZ(k_2)\in\cZ_{\cA,k}$, and this product can be expressed as a $\QQ$-linear combination of finite multiple zeta values of weight $k$.
Therefore, at least one finite multiple zeta value of weight $k$ must be non-zero.
\end{proof}
Although not used later, we provide here two curious formulas for values of level twelve.
\begin{proposition}\label{prop:level12}
Let $k\geq 3$ be an odd integer.
Then, we have
\begin{align*}
2\zeta_{\cA,12}^{[2]}(k)&=(2^{-k}-3^{-k}-4^{-k}+12^{-k})\fZ(k),\\
2\zeta_{\cA,12}^{[3]}(k)&=(3^{-k}-4^{-k}-6^{-k}+12^{-k})\fZ(k).
\end{align*}
\end{proposition}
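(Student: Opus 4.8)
The plan is to reduce everything, via Lemma~\ref{lem:j-sum}, to the single-variable ($r=1$) partial power sums and to express each of them through $\fZ(k)$. Concretely, for $j=2$ and $j=3$ at level $N=12$, Lemma~\ref{lem:j-sum} gives
\[
12^k\zeta_{\cA,12}^{[2]}(k)=\Big(\sum_{p/6<m<p/4}\tfrac{1}{m^k}\bmod p\Big)_{p\in\PP},\qquad 12^k\zeta_{\cA,12}^{[3]}(k)=\Big(\sum_{p/4<m<p/3}\tfrac{1}{m^k}\bmod p\Big)_{p\in\PP},
\]
so it suffices to understand the initial-segment sums $\Phi(d):=\big(\sum_{0<m<p/d}m^{-k}\bmod p\big)_{p\in\PP}$, since the two displays above equal $\Phi(4)-\Phi(6)$ and $\Phi(3)-\Phi(4)$ respectively. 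The heart of the argument is therefore the claim that for each $d\in\{2,3,4,6\}$ there is an explicit rational $c_d$ with $\Phi(d)=c_d\,\fZ(k)$; granting this, the proposition drops out after dividing by $12^k$ and checking that $\tfrac1{12^k}(c_4-c_6)=2^{-k}-3^{-k}-4^{-k}+12^{-k}$ and $\tfrac1{12^k}(c_3-c_4)=3^{-k}-4^{-k}-6^{-k}+12^{-k}$.

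To prove the claim I would first replace $m^{-k}$ by $m^{p-1-k}$ (valid mod $p$), and then apply Faulhaber's formula in Bernoulli-polynomial form,
\[
\sum_{m=1}^{\lfloor p/d\rfloor}m^{p-1-k}=\frac{B_{p-k}(\lfloor p/d\rfloor+1)-B_{p-k}}{p-k}.
\]
Because $k<p-1$, the coefficients of $B_{p-k}(x)$ lie in $\ZZ_{(p)}$ (von Staudt--Clausen rules out a $p$ in the denominator of $B_i$ for $0\le i\le p-k$), so reduction mod $p$ is legitimate and $\tfrac1{p-k}\equiv-\tfrac1k$. Writing $\lfloor p/d\rfloor+1\equiv s/d\pmod p$ with $s=d-(p\bmod d)$, I get $\Phi(d)\equiv-\tfrac1k\big(B_{p-k}(s/d)-B_{p-k}\big)$. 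The two residues of $p$ modulo $d$ produce the two arguments $s/d$ and $1-s/d$, but since $p-k$ is even the reflection formula $B_n(1-x)=(-1)^nB_n(x)$ makes both equal to $B_{p-k}(1/d)$; this is exactly what removes the apparent dependence on $p\bmod d$. Finally I would evaluate $B_{p-k}(1/d)\bmod p$ using $B_n(1/2)=(2^{1-n}-1)B_n$ together with the multiplication (Raabe) formula $B_n(mx)=m^{n-1}\sum_{a=0}^{m-1}B_n(x+a/m)$, collapsing each $2^{1-(p-k)}\equiv2^k$ and $3^{1-(p-k)}\equiv3^k$ by Fermat's little theorem. This yields $B_{p-k}(1/3)\equiv\tfrac12(3^k-1)B_{p-k}$, $B_{p-k}(1/4)\equiv2^{k-1}(2^k-1)B_{p-k}$, and $B_{p-k}(1/6)\equiv\tfrac12(2^k-1)(3^k-1)B_{p-k}$, hence $c_3=\tfrac12(3-3^k)$, $c_4=-(2^{2k-1}-2^{k-1}-1)$, and $c_6=-\tfrac12(6^k-2^k-3^k-1)$.

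The routine end of the proof is then to form the two differences and simplify: $c_4-c_6=\tfrac12(6^k-4^k-3^k+1)$ and $c_3-c_4=\tfrac12(4^k-3^k-2^k+1)$, which after division by $12^k$ give precisely the two stated identities. I expect the only genuinely delicate point to be the passage $\sum_{0<m<p/d}m^{-k}\mapsto B_{p-k}(1/d)$: one must justify the $p$-integrality of the Bernoulli-polynomial coefficients and, above all, observe that the parity of $p-k$ is what forces the $p\bmod d$ ambiguity to cancel, so that $\Phi(d)$ is a single well-defined multiple of $\fZ(k)$ rather than a quantity depending on the residue of $p$. Everything downstream---the multiplication-formula evaluations and the final simplifications---is mechanical. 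As an internal consistency check the same method gives $\Phi(1)\equiv-\tfrac1k(B_{p-k}(0)-B_{p-k})=0$, matching the vanishing of the single-variable value, and it is compatible with the reversal symmetry $\zeta_{\cA,12}^{[j]}(k)=-\zeta_{\cA,12}^{[11-j]}(k)$ coming from Proposition~\ref{prop:reversal}.
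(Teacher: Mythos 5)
Your proof is correct, and it reaches the result by a genuinely different middle step than the paper. Both arguments share the same outer frame: Lemma~\ref{lem:j-sum} with $N=12$, $j=2,3$ converts $12^k\zeta_{\cA,12}^{[2]}(k)$ and $12^k\zeta_{\cA,12}^{[3]}(k)$ into the power sums over $(p/6,p/4)$ and $(p/4,p/3)$ modulo $p$. At that point the paper simply cites the classics: it invokes Vandiver's congruence \cite[(7)]{Vandiver} (the first of the two resulting congruences also appears in \cite[(9)]{Stafford-Vandiver}), which expresses these interval sums of $m^{2n-1}$ as explicit multiples of $B_{2n}$ modulo $p$, and then substitutes $2n=p-k$. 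You instead re-derive such congruences from scratch: you split each interval sum as a difference of initial segments $\Phi(d)$, $d\in\{3,4,6\}$, apply Faulhaber's formula (with $p$-integrality of the coefficients of $B_{p-k}(x)$ secured by von Staudt--Clausen), use the reflection formula $B_n(1-x)=(-1)^nB_n(x)$ with $n=p-k$ even to erase the dependence on $p\bmod d$ --- correctly identified as the crux, and the only place where the oddness of $k$ enters --- and evaluate $B_{p-k}(1/d)$ via Raabe's multiplication formula and Fermat's little theorem. Your constants check out: $c_3=\tfrac12(3-3^k)$, $c_4=-(2^{2k-1}-2^{k-1}-1)$, $c_6=-\tfrac12(6^k-2^k-3^k-1)$, whence $c_4-c_6=\tfrac12(6^k-4^k-3^k+1)$ and $c_3-c_4=\tfrac12(4^k-3^k-2^k+1)$; dividing by $12^k$ gives exactly the stated coefficients, and these agree with what the cited Vandiver congruences yield at $2n=p-k$. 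The trade-off: the paper's proof is short because the arithmetic is outsourced to the literature, while yours is self-contained, explains structurally why odd $k$ is needed, and makes visible that the method produces a single rational multiple of $\fZ(k)$ precisely for those moduli $d$ with $(\ZZ/d\ZZ)^{\times}=\{\pm\overline{1}\}$, i.e.\ $d\in\{1,2,3,4,6\}$ --- which is why level $12$ is the natural home for such formulas.
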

\begin{proof}
Let $n$ be a positive integer and $p\geq 5$ a prime such that $p-1\nmid 2n$. 
From Vandiver's congruence \cite[(7)]{Vandiver}, we can derive the following two well-known congruences:
\begin{align*}
\frac{3^{p-2n}+4^{p-2n}-6^{p-2n}-1}{4n}B_{2n}&\equiv\sum_{\frac{p}{6}<m<\frac{p}{4}}m^{2n-1}\pmod{p},\\
\frac{2^{p-2n}+3^{p-2n}-4^{p-2n}-1}{4n}B_{2n}&\equiv\sum_{\frac{p}{4}<m<\frac{p}{3}}m^{2n-1}\pmod{p}.
\end{align*}
The first one is also given in \cite[(9)]{Stafford-Vandiver}.
Setting $2n=p-k$ and varying $p$, we obtain the desired formulas by Lemma~\ref{lem:j-sum}.
\end{proof}
\section{Fermat quotients}\label{sec:Fermat}
Let $N$ be a positive integer and $p$ a prime number that does not divide $N$.
We define the \emph{Fermat quotient with base} $N$ by
\[
q_p(N)\coloneqq\frac{N^{p-1}-1}{p}.
\]
For an integer $j$ satisfying $0\leq j<N$, we set $s_p(j,N)$ as
\[
s_p(j,N)\coloneqq\sum_{\frac{jp}{N}<m<\frac{(j+1)p}{N}}\frac{1}{m}.
\]
In 1850, Eisenstein~\cite{Eisenstein} proved that $2q_p(2)\equiv-s_p(0,2)\pmod{p}$.
We use the following generalization due to Skula, Dobson, and Ichimura.
\begin{theorem}[Skula~\cite{Skula}, Dobson~\cite{Dobson}, Ichimura~\cite{Ichimura}]\label{thm:SDI}
For every positive integer $N$, we have
\[
(N+1)q_p(2)\equiv-\sum_{0\leq j<\frac{N}{2}}s_p(2j,2N)\pmod{p}.
\]
\end{theorem}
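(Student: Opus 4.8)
The plan is to express the target sum $E \coloneqq \sum_{0\le j<N/2}s_p(2j,2N)$, together with its odd-indexed companion $O\coloneqq\sum_{0\le 2j+1<N}s_p(2j+1,2N)$, through two linear congruences and then solve for $E$. First I would note that for $p\nmid 2N$ the intervals $(\tfrac{jp}{2N},\tfrac{(j+1)p}{2N})$ with $0\le j\le N-1$ tile $(0,p/2)$ (no interior division point is an integer), so that $E+O=\sum_{j=0}^{N-1}s_p(j,2N)=s_p(0,2)$. Eisenstein's congruence then supplies the first relation $E+O\equiv -2q_p(2)\pmod p$. It therefore suffices to evaluate the alternating combination $E-O$, since $E=\tfrac12\bigl((E+O)+(E-O)\bigr)$ will immediately give $E\equiv -(N+1)q_p(2)$ once $E-O\equiv -2Nq_p(2)\pmod p$ is in hand.

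The heart of the argument is the evaluation of the full alternating interval sum
\[
A \coloneqq \sum_{j=0}^{2N-1}(-1)^j s_p(j,2N)=\sum_{m=1}^{p-1}\frac{(-1)^{\lfloor 2Nm/p\rfloor}}{m}.
\]
Here I would use the following parity identity: writing $r_m\coloneqq(2Nm\bmod p)\in\{1,\dots,p-1\}$, one has $\lfloor 2Nm/p\rfloor=(2Nm-r_m)/p\equiv 2Nm-r_m\equiv r_m\pmod 2$, because $p$ is odd and $2Nm$ is even. Hence $(-1)^{\lfloor 2Nm/p\rfloor}=(-1)^{r_m}$, which converts the geometric ``which interval'' sign into a purely residue-theoretic sign. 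Since $m\mapsto r_m$ is a bijection of $\{1,\dots,p-1\}$ and $\tfrac1m\equiv\tfrac{2N}{r_m}\pmod p$, reindexing gives $A\equiv 2N\sum_{r=1}^{p-1}\frac{(-1)^r}{r}\pmod p$. Finally $\sum_{r=1}^{p-1}\frac{(-1)^r}{r}\equiv s_p(0,2)\equiv -2q_p(2)$, because the even terms contribute $\tfrac12 s_p(0,2)$ and the odd terms $-\tfrac12 s_p(0,2)$ after using $\sum_{m=1}^{p-1}\tfrac1m\equiv 0$, and then Eisenstein applies. Thus $A\equiv -4Nq_p(2)\pmod p$.

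To connect $A$ with $E-O$ I would invoke the reflection $m\mapsto p-m$, which yields $s_p(2N-1-j,2N)\equiv -s_p(j,2N)\pmod p$; pairing $j$ with $2N-1-j$ inside $A$ and using $(-1)^{2N-j}=(-1)^j$ collapses the two halves and gives $A\equiv 2\sum_{j=0}^{N-1}(-1)^j s_p(j,2N)=2(E-O)$. Hence $E-O\equiv -2Nq_p(2)$, and combining this with $E+O\equiv -2q_p(2)$ produces the asserted $(N+1)q_p(2)\equiv -E=-\sum_{0\le j<N/2}s_p(2j,2N)\pmod p$. I expect the one genuinely clever point, and the main place where care is needed, to be the parity identity $(-1)^{\lfloor 2Nm/p\rfloor}=(-1)^{r_m}$: it is exactly where the evenness of the modulus $2N$ is exploited, it is what makes the reindexing work, and correctly tracking the parity of $\lfloor 2Nm/p\rfloor$ through division by the odd prime $p$ is the crux. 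Everything else is bookkeeping with Eisenstein's congruence and the reflection symmetry.
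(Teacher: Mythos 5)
The paper does not prove Theorem~\ref{thm:SDI} at all --- it is quoted as a result of Skula, Dobson, and Ichimura --- so there is no internal argument to compare yours against; what you have written is an independent, self-contained derivation, and after checking each step I find it correct. The frame is sound: since $p\nmid 2N$, no division point $jp/(2N)$ is an integer, so the $N$ subintervals indeed tile $(0,p/2)$ and Eisenstein gives $E+O\equiv -2q_p(2)\pmod p$. The parity identity $(-1)^{\lfloor 2Nm/p\rfloor}=(-1)^{r_m}$ is correct (the parity of $(2Nm-r_m)/p$ equals that of $2Nm-r_m$ because $p$ is odd, and $2Nm$ is even), the reindexing $A\equiv 2N\sum_{r=1}^{p-1}(-1)^r/r$ is valid since $m\mapsto r_m$ permutes $\{1,\dots,p-1\}$ and $1/m\equiv 2N/r_m\pmod p$, and the evaluation $\sum_{r=1}^{p-1}(-1)^r/r\equiv s_p(0,2)\equiv -2q_p(2)$ via $\sum_{r=1}^{p-1}1/r\equiv 0$ is right. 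The only blemish is a slip of bookkeeping in the folding step: you invoke ``$(-1)^{2N-j}=(-1)^j$'', but the index paired with $j$ is $2N-1-j$, whose sign is $(-1)^{2N-1-j}=-(-1)^j$; this extra minus sign is precisely what cancels the minus in the reflection congruence $s_p(2N-1-j,2N)\equiv -s_p(j,2N)$, so that each pair doubles rather than cancels and $A\equiv 2(E-O)$ as you claim. With that wording repaired, the chain $E-O\equiv -2Nq_p(2)$ and $E=\tfrac12\bigl((E+O)+(E-O)\bigr)\equiv -(N+1)q_p(2)$ closes the proof; I also confirmed the intermediate congruences numerically in small cases (e.g.\ $p=7$, $N=3$), and they all hold.
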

Here, we define the logarithm function $\log_{\cA}\colon\QQ^{\times}\to\cA$ as $\log_{\cA}(N)\coloneqq(q_p(N)\bmod{p})_{p\in\PP}$.
(The Fermat quotient can be defined similarly even when $N$ is not a positive integer.)
The reason for the name is that the logarithmic law can be derived from the congruence $q_p(NM)\equiv q_p(N)+q_p(M)\pmod{p}$. 
Theorem~\ref{thm:SDI} derives
\begin{equation}\label{eq:A-SDI}
\log_{\cA}(2)=-\frac{2N}{N+1}\sum_{0\leq j<N/2}\zeta_{\cA,2N}^{[2j]}(1)
\end{equation}
using Lemma~\ref{lem:j-sum}, and in particular it is shown that $\log_{\cA}(2)\in\cZ_{\cA,1}(2N)$ for every $N$.
\begin{proof}[Proof of Proposition~$\ref{prop:main}~\ref{item:main2})$]
We assume that Conjecture~\ref{conj:non-Wieferich} is true.
By this assumption, there exist infinitely many primes $p$ that do not divide $q_p(2)$ and hence $\log_{\cA}(2)\neq0$.
By \eqref{eq:A-SDI}, there exists an integer $j$ satisfying $0\leq j<N/2$ (fix one such $j$) such that $\zeta_{\cA,2N}^{[2j]}(1)\neq0$.
Let $k$ be an arbitrary positive integer.
Since $\cA$ is a reduced ring, we have $\zeta_{\cA,2N}^{[2j]}(1)^k\neq0$.
Since $\zeta_{\cA,2N}^{[2j]}(1)^k\in\cZ_{\cA,k}(2N)$ and this value can be expressed as a $\QQ$-lenear combination of finite multiple zeta values of level $2N$ and weight $k$, we see that at least one finite multiple zeta value of level $2N$ and weight $k$ must be non-zero.
\end{proof}
By using Lerch's congruence instead of Skula, Dobson, and Ichimura's congruence, we can obtain the following proposition through a similar argument.
\begin{theorem}[{Lerch~\cite[(8)]{Lerch}}]\label{thm:Lerch}
For $N\geq 2$, we have
\[
Nq_p(N)\equiv\sum_{j=1}^{N-1}js_p(j,N)\pmod{p}.
\]
\end{theorem}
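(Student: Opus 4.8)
The plan is to reduce Lerch's congruence to a floor-weighted harmonic sum, inject Fermat's little theorem to modulus $p^{2}$ through binomial coefficients, and finally reconcile the two resulting shapes. First I would rewrite the right-hand side geometrically. Fixing a prime $p\nmid N$, the intervals $(jp/N,(j+1)p/N)$ for $0\le j<N$ partition $(0,p)$, and an integer $m$ lies in the $j$-th interval exactly when $\lfloor Nm/p\rfloor=j$; hence, as an exact identity of rational numbers,
\[
\sum_{j=1}^{N-1}j\,s_p(j,N)=\sum_{m=1}^{p-1}\frac{\lfloor Nm/p\rfloor}{m}.
\]
Equivalently, by the $\bk=(1)$ case of Lemma~\ref{lem:j-sum}, the assertion is the clean identity $\log_{\cA}(N)=\sum_{j=1}^{N-1}j\,\zeta_{\cA,N}^{[j]}(1)$ in $\cZ_{\cA,1}(N)$. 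It is crucial to note at the outset that any purely combinatorial rearrangement of this floor sum (swapping the order of summation, Abel summation, or reindexing by the multiplication-by-$N$ permutation of residues) is circular: it merely reproduces the defining relation $\lfloor Nm/p\rfloor=(Nm-\langle Nm\rangle_p)/p$, where $\langle\,\cdot\,\rangle_p$ denotes the least positive residue. To make the Fermat quotient appear one must use Fermat's little theorem to second order.

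The arithmetic engine I would use is the congruence $\binom{p}{k}\equiv(-1)^{k-1}\,p/k\pmod{p^2}$, which yields the finite-logarithm identity $\sum_{k=1}^{p-1}\frac{(-1)^{k-1}}{k}t^{k}\equiv\frac{(1+t)^{p}-t^{p}-1}{p}\pmod p$ as polynomials in $t$. Summing this over $t=1,2,\dots,N-1$ and telescoping $\frac{(1+a)^p-a^p-1}{p}=(a+1)q_p(a+1)-a\,q_p(a)$ gives, with $q_p(1)=0$,
\[
N q_p(N)=\frac{N^{p}-N}{p}\equiv\sum_{k=1}^{p-1}\frac{(-1)^{k-1}}{k}\sum_{a=1}^{N-1}a^{k}\pmod p.
\]
Thus the left-hand side of the theorem is cleanly expressed through the power sums $\sum_{a=1}^{N-1}a^{k}$, with no reference to floors.

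It therefore remains to reconcile the geometric and arithmetic shapes, namely to prove
\[
\sum_{m=1}^{p-1}\frac{\lfloor Nm/p\rfloor}{m}\equiv\sum_{k=1}^{p-1}\frac{(-1)^{k-1}}{k}\sum_{a=1}^{N-1}a^{k}\pmod p,
\]
and this is where I expect the main obstacle to lie. My plan is to write $\lfloor Nm/p\rfloor=\sum_{i=1}^{N-1}\mathbf{1}[m>ip/N]$, so that (using $\sum_{m=1}^{p-1}1/m\equiv0$) the floor sum becomes $-\sum_{i=1}^{N-1}G_i$ with the partial harmonic sums $G_i=\sum_{m=1}^{\lfloor ip/N\rfloor}1/m$, and then to evaluate each $G_i$ through the relation $(-1)^{t}\binom{p-1}{t}\equiv1-p\sum_{m=1}^{t}1/m\pmod{p^2}$, converting everything into the residues of $\binom{p-1}{\lfloor ip/N\rfloor}$ modulo $p^{2}$. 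The genuine difficulty is that these partial harmonic sums cannot be evaluated by rearrangement (that route is circular, as noted); they must be pinned down by an independent second-order argument, after which one shows their signed combination assembles into $N^{p}-N$. For $N=2$ this is exactly Eisenstein's computation via the even/odd splitting of the alternating harmonic sum, and the crux of the general case is carrying out the analogous splitting---governed by the digits $\lfloor ip/N\rfloor$ rather than by residues modulo $N$---to make the partial sums collapse onto $N q_p(N)$.
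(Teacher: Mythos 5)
Your two reformulations are individually correct: the exact identity $\sum_{j=1}^{N-1}js_p(j,N)=\sum_{m=1}^{p-1}\lfloor Nm/p\rfloor/m$, and the telescoped congruence $Nq_p(N)\equiv\sum_{k=1}^{p-1}\frac{(-1)^{k-1}}{k}\sum_{a=1}^{N-1}a^{k}\pmod{p}$ (minor caveat: if $N>p$ some $a\leq N-1$ is divisible by $p$ and $q_p(a)$ is undefined, so telescope with $(a^p-a)/p$ instead). But the proposal stops exactly where the theorem lives: the ``reconciliation'' congruence you isolate is equivalent to Lerch's congruence itself, and you acknowledge you do not prove it. Worse, the route you sketch for closing the gap --- evaluating each partial harmonic sum $G_i=\sum_{m\leq\lfloor ip/N\rfloor}1/m$ separately via $\binom{p-1}{t}\bmod{p^2}$ --- cannot succeed term by term, because the individual $G_i$ are \emph{not} determined by Fermat quotients alone. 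Already for $N=5$, classical results of H.~C.~Williams and Z.-H.~Sun show that the fifth-interval sums $\sum_{m\leq\lfloor ip/5\rfloor}1/m$ modulo $p$ involve the Fibonacci quotient $F_{p-\left(\frac{p}{5}\right)}/p$; such extraneous invariants cancel only in the aggregate $\sum_{i=1}^{N-1}G_i$. So the per-interval step you defer is strictly harder than the theorem itself, and for general $N$ it drags in generalized Bernoulli/unit data that your framework has no way to control. (For calibration: the paper does not prove Theorem~\ref{thm:Lerch}; it quotes Lerch~\cite{Lerch}, so the comparison must be with the classical argument.)

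The missing idea is a per-$m$, not per-interval, second-order argument combined with the permutation $m\mapsto Nm\bmod{p}$ of $\{1,\dots,p-1\}$; it finishes the proof in a few lines. Write $Nm=pA_m+r_m$ with $A_m=\lfloor Nm/p\rfloor$ and $r_m$ the least positive residue of $Nm$ modulo $p$. Raising to the $(p-1)$st power and expanding modulo $p^2$ gives
\[
(1+pq_p(N))(1+pq_p(m))=N^{p-1}m^{p-1}=(pA_m+r_m)^{p-1}\equiv r_m^{p-1}-pA_mr_m^{p-2}\pmod{p^{2}},
\]
whence $q_p(N)+q_p(m)\equiv q_p(r_m)-A_mr_m^{-1}\pmod{p}$. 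Summing over $m=1,\dots,p-1$, the terms $\sum_m q_p(m)$ and $\sum_m q_p(r_m)$ cancel because $m\mapsto r_m$ is a bijection of $\{1,\dots,p-1\}$, so $-q_p(N)\equiv(p-1)q_p(N)\equiv-\sum_{m}A_mr_m^{-1}\pmod{p}$; since $r_m^{-1}\equiv(Nm)^{-1}\pmod{p}$, this reads $Nq_p(N)\equiv\sum_{m=1}^{p-1}\lfloor Nm/p\rfloor/m\pmod{p}$, which together with your first identity is the theorem. Note that your $N=2$ touchstone is this argument in disguise: Eisenstein's even/odd splitting is precisely the bookkeeping of the permutation $m\mapsto 2m\bmod{p}$, which is why it did not reveal the general mechanism.
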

This theorem derives
\[
\log_{\cA}(N)=\sum_{j=1}^{N-1}j\cdot\zeta_{\cA,N}^{[j]}(1)\in\cZ_{\cA,1}(N).
\]
In particular, by Lemma~\ref{lem:levels}, we see that $\log_{\cA}(N)\in\cZ_{\cA,1}(M)$ for any positive integer $M$ that is a multiple of $N$.
\begin{proposition}\label{prop:submain}
Let $N\geq 2$ be an integer.
Assume that there exist infinitely many primes $p$ that do not divide $q_p(N)$.
Let $M$ be any positive multiple of $N$.
Then, for each positive integer $k$, there exists at least one non-zero finite multiple zeta value of level $M$ and weight $k$.
\end{proposition}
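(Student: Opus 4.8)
The plan is to follow the proof of Proposition~\ref{prop:main}~$\ref{item:main2})$ almost verbatim, substituting Lerch's congruence (Theorem~\ref{thm:Lerch}) for the Skula--Dobson--Ichimura congruence and using Lemma~\ref{lem:levels} to transport the resulting non-vanishing value from level $N$ up to the arbitrary multiple $M$. First I would observe that the hypothesis---there exist infinitely many primes $p$ with $p\nmid q_p(N)$---says precisely that the representative $(q_p(N)\bmod p)_{p\in\PP}$ is not supported on a finite set of primes, that is, that $\log_{\cA}(N)\neq0$ in $\cA$.

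Next I would invoke the identity
\[
\log_{\cA}(N)=\sum_{j=1}^{N-1}j\cdot\zeta_{\cA,N}^{[j]}(1)
\]
obtained from Theorem~\ref{thm:Lerch}. Since the left-hand side is non-zero, at least one summand is non-zero, so I may fix an integer $j$ with $1\leq j\leq N-1$ and $\zeta_{\cA,N}^{[j]}(1)\neq0$. Because $\cA$ is a reduced ring, a non-zero element has non-zero powers, whence $\zeta_{\cA,N}^{[j]}(1)^k\neq0$ for every positive integer $k$. It then remains only to locate this power in the correct space: by construction $\zeta_{\cA,N}^{[j]}(1)\in\cZ_{\cA,1}(N)$, and since $N\mid M$, Lemma~\ref{lem:levels} yields $\cZ_{\cA,1}(N)\subset\cZ_{\cA,1}(M)$. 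As $\cZ_{\cA}(M)$ is a $\QQ$-subalgebra with $\cZ_{\cA,k_1}(M)\cdot\cZ_{\cA,k_2}(M)\subset\cZ_{\cA,k_1+k_2}(M)$, an easy induction gives $\zeta_{\cA,N}^{[j]}(1)^k\in\cZ_{\cA,k}(M)$. Expanding this $k$-fold product through the harmonic product formula as a $\QQ$-linear combination of finite multiple zeta values of level $M$ and weight $k$, the non-vanishing of the total forces at least one such value to be non-zero, which is the desired conclusion.

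As for obstacles, there is no genuine analytic difficulty, since all the substantive content is already packaged into Theorem~\ref{thm:Lerch} and Lemma~\ref{lem:levels}. The one step meriting care---and the feature distinguishing this statement from the base-$2$ case treated in Proposition~\ref{prop:main}~$\ref{item:main2})$---is the level transport: Lerch's congruence only produces a non-zero value of level $N$, and it is Lemma~\ref{lem:levels} that upgrades it to the arbitrary multiple $M$. Accordingly I would double-check that the containment $\cZ_{\cA,1}(N)\subset\cZ_{\cA,1}(M)$ is invoked at the appropriate weight and that the power $\zeta_{\cA,N}^{[j]}(1)^k$ indeed lands in $\cZ_{\cA,k}(M)$ rather than merely in $\cZ_{\cA,k}(N)$.
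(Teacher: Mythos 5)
Your proof is correct and matches the argument the paper intends: it explicitly indicates that Proposition~\ref{prop:submain} follows ``through a similar argument'' to Proposition~\ref{prop:main}~$\ref{item:main2})$, replacing the Skula--Dobson--Ichimura congruence by Lerch's identity $\log_{\cA}(N)=\sum_{j=1}^{N-1}j\cdot\zeta_{\cA,N}^{[j]}(1)$ and using Lemma~\ref{lem:levels} to pass from level $N$ to the multiple $M$, exactly as you do. The only (immaterial) difference is the order of operations---you transport $\zeta_{\cA,N}^{[j]}(1)$ into $\cZ_{\cA,1}(M)$ before taking $k$th powers, whereas one could equally well take powers in $\cZ_{\cA,k}(N)$ and then apply Lemma~\ref{lem:levels} at weight $k$---and both are valid.
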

\begin{remark}
Proposition~\ref{prop:submain} is an extension of Proposition~\ref{prop:main}~$\ref{item:main2}$), and thanks to Lemma \ref{lem:levels}, to obtain Proposition~\ref{prop:main}~$\ref{item:main2}$), Eisenstein's congruence is sufficient, while Skula, Dobson, and Ichimura's congruence is not necessary.
However, \eqref{eq:A-SDI} demonstrates a fact that is stronger than the mere $\log_{\cA}(2)\in\cZ_{\cA,1}(2N)$, making it worth mentioning due to the fact that it restricts the types of color maps used in zeta values to the form of $[2j]$.
\end{remark}
\section{Problems}\label{sec:problems}
Conjecture~\ref{conj:regular} has been shown to imply the existence of non-zero zeta values for each weight.
However, in the current state where not a single non-zero value is known to exist, it is crucial to prove the existence of at least one.
To achieve this, it is sufficient to address the following problem, which is weaker than Conjecture~\ref{conj:non-Wolstenholme}.
\begin{problem}\label{prob:1}
Show that there exists at least one odd integer $k\geq 3$ such that there are infinitely many primes $p>k$ that do not divide $B_{p-k}$.
\end{problem}
If this problem is resolved in the affirmative, it implies that at least one non-zero finite multiple zeta value exists.
From the perspective of the Kaneko--Zagier conjecture, $\fZ(k)$ can be considered as a counterpart to the Riemann zeta value $\zeta(k)$.
In a sense, this problem aims for a result analogous to Ball--Rivoal~\cite{Bal-Rivoal} and Zudilin~\cite{Zudilin}'s outstanding works for Riemann zeta values, albeit in a slightly weaker form.
For every odd integer $k\geq 3$, it is conjectured that $\fZ(k)$ is not only non-zero but also irrational.
More strongly, it is expected to be a non-zero zero divisor.

Regarding Conjecture~$\ref{conj:non-Wieferich}$ and the assumption of Proposition~$\ref{prop:submain}$, several conditional results are known.
One of the most famous results is due to Silverman~\cite{Silverman}, asserts the correctness of these conjectures under the ABC conjecture.
It should be noted that the author lacks the expertise to comprehend the claimed proof of the ABC conjecture by Mochizuki \cite{Mochizuki}.
On a related note, it's worth mentioning about Artin's primitive root conjecture, which is a well-known conjecture about a different kind of prime numbers.
It states that for any integer $g$ that is neither $-1$ nor a square integer, there exist infinitely many primes $p$ for which $g$ is a primitive root modulo $p$.
This conjecture was proved for all values of $g$ by Hooley~\cite{Hooley} under the generalized Riemann hypothesis.
On the other hand, there exist unconditional results which state that the conjecture holds for at least one value of $g$, as shown by Gupta--Ram Murty~\cite{Gupta-Murty} and Heath-Brown~\cite{Heath-Brown}.
In particular, according to Heath-Brown's result, Artin's conjecture is true for at least one of the values $g=2$, $3$, or $5$.
Expecting a similar progress for the case of Fermat quotients, an unconditional solution to the following problem is desired.
\begin{problem}\label{prob:2}
Show that there exists at least one integer $N\geq 2$ such that there are infinitely many primes $p$ which do not divide $q_p(N)$.
\end{problem}
If this problem is resolved in the affirmative, it implies that for at least one (therefore infinitely many) $N\geq 2$, there exists a non-zero finite multiple zeta value of level $N$ for each weight.

For each odd prime $p$, $\ell_p$ denotes the least positive integer $N$ for which $p$ does not divide $q_p(N)$.
There have been several studies providing upper bounds for $\ell_p$.
An early result by Lenstra Jr.~\cite{Lenstra} gives $\ell_p\leq 4(\log p)^2$.
One of more recent improvements is $\ell_p\leq(\log p)^{463/252+o(1)}$ as $p\to\infty$, proved by Bourgain, Ford, Konyagin, and Shparlinski~\cite{Bourgain-etal}.
We aim to eliminate the dependence on $p$ in these upper bounds.
\begin{problem}[{\cite[Conjecture~9]{Granville}}]\label{prob:3}
Show that there exists an integer $M$ such that $\ell_p\leq M$ holds for every odd prime $p$.
\end{problem}
If Problem~\ref{prob:3} is resolved affirmatively, it is clear that Problem~\ref{prob:2} follows in the affirmative as well.
Let us provide an alternative formulation of the statement in Problem~\ref{prob:3}.
For each $N$, we set
\[
W(N)\coloneqq\{p\in\PP : N^{p-1}-1\equiv0\pmod{p^2}\}, \quad W^{\rc}(N)\coloneqq\PP\setminus W(N).
\]
\begin{lemma}\label{lem:l_p}
A positive resolution of Problem~$\ref{prob:3}$ is equivalent to the validity of the following statement$:$ there exists an integer $M\geq 3$ such that the intersection $\bigcap_{N=2}^MW(N)$ is finite.
\end{lemma}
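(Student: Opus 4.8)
The plan is to prove both implications by setting up a dictionary between the quantity $\ell_p$ and membership in the sets $W(N)$. The key observation I would record first is this: \emph{for an odd prime $p$ with $p>M$}, every integer $N$ with $2\le N\le M$ is coprime to $p$, so $q_p(N)$ is defined and $p\in W(N)\iff p\mid q_p(N)$; consequently such a $p$ lies in $\bigcap_{N=2}^M W(N)$ if and only if $p\mid q_p(N)$ for all $N\in\{2,\dots,M\}$, that is, if and only if $\ell_p>M$. Alongside this I would note two exclusions: since $2^{2-1}-1=1\not\equiv0\pmod 4$ we have $2\notin W(2)$, so the even prime never enters the intersection; and for an odd prime $p$ with $p\le M$ we have $p^{p-1}-1\equiv-1\pmod{p^2}$, so $p\notin W(p)$ and $p$ is excluded from the intersection as well.

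For the direction ``Problem~\ref{prob:3} $\Rightarrow$ finiteness'', I would take the uniform bound $M$ furnished by Problem~\ref{prob:3}, enlarge it so that $M\ge3$ (which only shrinks the intersection), and show that $\bigcap_{N=2}^M W(N)$ is in fact \emph{empty}: an odd prime $p$ lying in it would, by the dictionary, have to satisfy $p>M$ and $\ell_p>M$, contradicting the uniform bound, while $p=2$ is excluded directly. For the converse, I would start from a finite set $S\coloneqq\bigcap_{N=2}^M W(N)$ with $M\ge3$ and show that the set of odd primes $p$ with $\ell_p>M$ is finite: any such $p$ with $p>M$ lies in $S$ by the dictionary, and the only remaining candidates are the finitely many odd primes $p\le M$. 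Enlarging $M$ past the values $\ell_p$ attached to these finitely many exceptional primes then produces a uniform bound and settles Problem~\ref{prob:3}.

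The one point requiring genuine input, rather than bookkeeping, is that $\ell_p$ is \emph{finite} for every odd prime $p$; this is used when absorbing the finitely many small primes into the final bound. I would justify it by a counting argument: the congruence $x^{p-1}\equiv1\pmod{p^2}$ has exactly $\gcd(p-1,\,p(p-1))=p-1$ solutions in $(\ZZ/p^2\ZZ)^{\times}$, which is strictly fewer than the $\varphi(p^2)=p(p-1)$ units, so some residue coprime to $p$, hence some positive integer $N$, satisfies $p\nmid q_p(N)$. The main obstacle, such as it is, is purely organizational: one must keep straight the two distinct ways a prime can fail to lie in $\bigcap_{N=2}^M W(N)$ (either because $p\mid N$ for some $N\le M$, where $q_p(N)$ is undefined, or because $p\nmid q_p(N)$ genuinely), and ensure that the finitely many primes $p\le M$ are handled through the finiteness of $\ell_p$ rather than through the dictionary, which applies only when $p>M$.
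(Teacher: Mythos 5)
Your proposal is correct and follows essentially the same route as the paper: translate membership in $\bigcap_{N=2}^M W(N)$ into the condition $\ell_p>M$ for odd primes $p>M$, dispose of $p=2$ and of odd primes $p\le M$ separately (via $p\notin W(p)$), and in the converse direction absorb the finitely many exceptional primes by taking the maximum of their $\ell_p$-values. The only real difference is the auxiliary fact invoked: the paper notes the classical bound $\ell_p<p$ to handle small primes uniformly, whereas you prove only the weaker statement that $\ell_p$ is finite, via a counting argument in $(\ZZ/p^2\ZZ)^{\times}$ --- both suffice for the argument.
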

\begin{proof}
Let $M\geq 3$ be an integer and we note that $\ell_p<p$.
We can easily check that
\[
\forall p\geq 3, \ell_p\leq M \Longleftrightarrow \PP=\bigcup_{N=2}^MW^{\rc}(N) \Longleftrightarrow \bigcap_{N=2}^MW(N)=\emp.
\]
Assume that $\bigcap_{N=2}^MW(N)$ is a non-empty finite set and write it $\{p_1, \dots, p_k\}$ by enumeration.
Let $L$ be defined as $L\coloneqq\max\{\ell_{p_1},\dots, \ell_{p_k}\}$.
Then, we have $\bigcap_{N=2}^LW(N)=\emp$.
\end{proof}
Let us define $\eth_p$ as the Bernoulli--Seki number analogue of $\ell_p$.
\begin{definition}\label{def:d_p}
Let $p\geq 5$ be a prime.
We define $\eth_p$ as the least odd integer $k\geq 3$ for which $p$ does not divide $B_{p-k}$.
\end{definition}
\begin{proposition}\label{prop:upper_bound_eth}
For every prime $p\geq11$, we have
\[
\eth_p\leq\begin{cases}\frac{p-3}{2} & \text{if } p\equiv1\pmod{4}, \\ \frac{p-5}{2} & \text{if } p\equiv3\pmod{4}.\end{cases}
\]
\end{proposition}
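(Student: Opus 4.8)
The plan is to reduce the statement to a rigidity property of a single polynomial over $\mathbb{F}_p$ assembled from the harmonic sums, and then to break the extremal configuration by evaluating that polynomial at $a=2$ and $a=3$.

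First I would record the master identity. Applying Faulhaber's formula with exponent $p-2$ and reducing modulo $p$ (using $\binom{p-1}{j}\equiv(-1)^j$ and $\tfrac{1}{p-1}\equiv-1$), one obtains, as an identity in $\mathbb{F}_p[a]$,
\[
f(a):=\sum_{m=1}^{a-1}\frac1m\equiv-\sum_{j=0}^{p-2}(-1)^jB_j\,a^{p-1-j}\pmod p .
\]
Thus $f$ has degree $p-1$, leading coefficient $-1$, the coefficient of $a^{p-2}$ is $-\tfrac12$, and for even $e$ the coefficient of $a^{e}$ equals $-B_{p-1-e}$; so the even-index Bernoulli--Seki numbers $B_2,\dots,B_{p-3}$ are, up to sign, exactly the coefficients of $f$. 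I would also record the reflection symmetry $f(a)=f(1-a)$, which follows from $m\mapsto p-m$ together with $\sum_{m=1}^{p-1}\frac1m\equiv0\pmod p$ and holds as a polynomial identity since both sides agree at all $p$ residues.

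Next I would translate the failure of the bound into a statement about $f$. Since $\eth_p$ is odd, $\eth_p$ exceeding the asserted bound means precisely that $p\mid B_n$ for every even $n$ in $[\tfrac{p+3}{2},p-3]$ when $p\equiv1\pmod4$, and in $[\tfrac{p+5}{2},p-3]$ when $p\equiv3\pmod4$. By the identity above this annihilates all monomials of $f$ of degree below $(p-1)/2$ (resp. $(p-3)/2$); together with $f(a)=f(1-a)$ this forces both $a^{(p-1)/2}$ and $(a-1)^{(p-1)/2}$ (resp. the $(p-3)/2$ powers) to divide $f$. A degree count then pins $f$ down: writing $u:=a(a-1)$ and reading off the leading and sub-leading coefficients, one gets $f=-u^{(p-1)/2}$ when $p\equiv1\pmod4$, and $f=\alpha u^{(p-3)/2}-u^{(p-1)/2}$ for a single unknown $\alpha\in\mathbb{F}_p$ when $p\equiv3\pmod4$.

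Finally I would derive a contradiction using Euler's criterion $u^{(p-1)/2}\equiv\left(\frac{a(a-1)}{p}\right)$ and the explicit values $f(2)=1$, $f(3)=\tfrac32$. For $p\equiv1\pmod4$ the extremal form gives $f(a)\in\{0,\pm1\}$ for all $a$, whereas $f(3)=\tfrac32\not\equiv\pm1\pmod p$ for every such $p\ge13$; that case is then done. I expect the genuine obstacle to be the case $p\equiv3\pmod4$: it is tighter by one step and carries the free parameter $\alpha$, so a single evaluation does not suffice. There I would evaluate at $a=2$ and $a=3$, solve for $\alpha$ both ways, and equate, obtaining the Legendre-symbol relation $\left(\frac2p\right)\bigl(2-9\left(\frac3p\right)\bigr)\equiv4\pmod p$; a short check over the four sign patterns shows this forces $p\in\{3,5,7,11\}$, and for the only relevant prime $p=11$ it contradicts $\left(\frac2{11}\right)=-1$. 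Hence the extremal configuration is impossible for every $p\ge11$, which is exactly the claim. The only delicate points I anticipate are the degree/parity bookkeeping that matches the exponent $(p-1)/2$ versus $(p-3)/2$ to $p\bmod4$, and this final elimination in the $p\equiv3$ case.
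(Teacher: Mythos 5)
Your proposal is correct, and it takes a genuinely different route from the paper. The paper's proof goes through the index of irregularity: from the trivial bound $\eth_p\le 2i(p)+3$, where $i(p)=\#\{2k : p\mid B_{2k},\ 2\le 2k\le p-3\}$, it suffices to bound $i(p)$, which is done by combining Vandiver's inequality $i(p)\le\log(h_p^-)/\log p$ with Carlitz's estimate $h_p^-<p^{(p+3)/4}2^{-(p-1)/4}$; an elementary inequality makes this work for $p\ge 71$, and the remaining primes $11\le p<71$ are settled by the numerical fact that $\eth_p=3$ for all $p<16843$. You instead give a self-contained polynomial-rigidity argument: the extremal configuration annihilates all low-degree coefficients of the interpolating polynomial $f$ of the harmonic sums, the symmetry $f(a)=f(1-a)$ and a degree count force $f=-u^{(p-1)/2}$, resp.\ $f=\alpha u^{(p-3)/2}-u^{(p-1)/2}$ with $u=a(a-1)$, and Euler's criterion at $a=2,3$ rules this shape out for $p\ge 11$. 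I checked your bookkeeping and it is sound: the divisibility window of even indices $[\tfrac{p+3}{2},p-3]$, resp.\ $[\tfrac{p+5}{2},p-3]$, matches exactly the vanishing of all coefficients in degrees below $(p-1)/2$, resp.\ $(p-3)/2$ (the odd-degree coefficients vanish automatically except in degree $p-2$); the sub-leading coefficient $-\tfrac12$ does pin the quotient to $-u+\alpha$; and your relation $\bigl(\tfrac{2}{p}\bigr)\bigl(2-9\bigl(\tfrac{3}{p}\bigr)\bigr)\equiv 4\pmod{p}$ admits only $p\in\{3,5,7,11\}$, with $p=11$ excluded because the required sign pattern needs $\bigl(\tfrac{2}{11}\bigr)=+1$ while in fact $\bigl(\tfrac{2}{11}\bigr)=-1$. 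Two details to make explicit in a full write-up: $B_j$ is $p$-integral for $0\le j\le p-2$ (von Staudt--Clausen), so reducing Faulhaber's formula modulo $p$ is legitimate; and your master identity uses the convention $B_1=-\tfrac12$, whereas the paper's generating function $te^t/(e^t-1)$ gives $B_1=+\tfrac12$ (harmless, since only even-index Bernoulli--Seki numbers and the intrinsic coefficient of $a^{p-2}$ enter the argument). As for what each approach buys: the paper's proof is short given the classical references and yields the stronger intermediate bound $i(p)<(p-8)/4$ for $p\ge 71$ on the total number of irregular indices, but it is not self-contained, relying on class-number machinery and a table check for small primes; your proof is completely elementary and uniform for all $p\ge 11$, needing nothing beyond Faulhaber's formula, Fermat's little theorem, and Euler's criterion, though it only excludes the single extremal window rather than controlling $i(p)$.
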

\begin{proof}
Let $i(p)$ denote the index of irregularity defined by $i(p)\coloneqq\#\{2k : p\mid B_{2k}, 2\leq 2k\leq p-3\}$.
Since a trivial inequality $\eth_p\leq 2i(p)+3$ holds, it suffices to provide an appropriate upper bound for $i(p)$.
Let $h_p^-$ denote the first factor of the class number of the $p$th cyclotomic field.
We then have the inequality $i(p)\leq\log(h_p^-)/\log p$ from Vandiver's result~\cite{Vandiver1}.
Since an elementary inequality $p^{\frac{p+3}{4}}2^{-\frac{p-1}{4}}\leq p^{\frac{p-8}{4}}$ holds for $p\geq 71$, we can obtain the desired bound from Carlitz's inequality $h_p^-<p^{\frac{p+3}{4}}2^{-\frac{p-1}{4}}$ (\cite[(21)]{Carlitz}).
Here, note that $\eth_p=3$ for all $p<16843$.
\end{proof}
\begin{remark}
When $p\equiv3\pmod{4}$, it is known from the congruence due to Cauchy (see \cite[5.2]{Carlitz1}) that $B_{\frac{p+1}{2}}\not\equiv0\pmod{p}$, leading to $\eth_p\leq\frac{p-1}{2}$.
When $p\equiv1\pmod{4}$, it is conjectured that $B_{\frac{p-1}{2}}\not\equiv0\pmod{p}$ by the congruence due to Kiselev~\cite{Kiselev} and the Ankeny, Artin, and Chowla conjecture~\cite{Ankeny-Artin-Chowla}, leading to $\eth_p\leq\frac{p+1}{2}$.
While the proposition above provides better bounds than those derived from these considerations, it would be desirable to obtain more non-trivial bound, for example, $\eth_p \leq p^{1-\varepsilon}$ for some positive $\varepsilon$.
\end{remark}
It is conjectured that $\sup_{p\in\PP}i(p)=\infty$, but on the other hand, $\eth_p$ is expected to be bounded. (Does $\eth_p\leq 5$ always hold?)
\begin{problem}\label{prob:4}
Show that there exists an integer $M$ such that $\eth_p\leq M$ holds for every prime $p\geq 5$.
\end{problem}
If Problem~\ref{prob:4} is resolved affirmatively, it is clear that Problem~\ref{prob:1} follows in the affirmative as well.
For each odd integer $k\geq 3$, we set $I(k)\coloneqq\{p\in\PP : p\mid B_{p-k}\}$.
By a similar argument as in the proof of  Lemma~\ref{lem:l_p}, we can prove the following.
\begin{lemma}\label{lem:d_p}
A positive resolution of Problem~$\ref{prob:4}$ is equivalent to the validity of the following statement$:$ there exists an integer $M\geq 5$ such that the intersection $\bigcap_{3\leq k\leq M, \ k: \text{ odd}}I(k)$ is finite.
\end{lemma}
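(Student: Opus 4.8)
The plan is to run the argument of Lemma~\ref{lem:l_p} essentially verbatim, with the pair $(\ell_p, W(N))$ replaced by $(\eth_p, I(k))$ and with the elementary bound $\ell_p<p$ replaced by Proposition~\ref{prop:upper_bound_eth}, whose role is precisely to guarantee that $\eth_p$ is a well-defined finite integer for every prime $p\geq 5$. First I would introduce the complementary sets $I^{\rc}(k)\coloneqq\PP\setminus I(k)=\{p\in\PP : p\nmid B_{p-k}\}$ for odd $k\geq 3$, and for an integer $M\geq 5$ set
\[
J_M\coloneqq\bigcap_{\substack{3\leq k\leq M\\ k\text{ odd}}}I(k).
\]
The key observation, immediate from Definition~\ref{def:d_p}, is that for every prime $p\geq 5$ one has $\eth_p\leq M$ if and only if some odd $k\in[3,M]$ satisfies $p\nmid B_{p-k}$, i.e.\ $p\in I^{\rc}(k)$ for some such $k$; equivalently, $\eth_p\leq M$ if and only if $p\notin J_M$.

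Granting this equivalence, the two implications are short. For the forward direction, assume Problem~\ref{prob:4} holds with bound $M_0$, and put $M\coloneqq\max\{M_0,5\}$. Then every prime $p\geq 5$ satisfies $\eth_p\leq M$, hence lies outside $J_M$, so $J_M\subseteq\{2,3\}$ is finite, as required. For the converse, suppose $M\geq 5$ is such that $J_M$ is finite. The primes $p\geq 5$ lying in $J_M$ then form a finite (possibly empty) set $\{p_1,\dots,p_s\}$, and by the equivalence every prime $p\geq 5$ \emph{not} among them satisfies $\eth_p\leq M$. For each exceptional prime $p_i$, Proposition~\ref{prop:upper_bound_eth} ensures that $\eth_{p_i}$ is finite; setting $L\coloneqq\max\{M,\eth_{p_1},\dots,\eth_{p_s}\}$ yields $\eth_p\leq L$ for every prime $p\geq 5$, which is Problem~\ref{prob:4}. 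This closes the equivalence.

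The only genuine input beyond set-theoretic bookkeeping is the finiteness of each individual $\eth_p$ for $p\geq 5$, which is exactly the analogue of the trivial remark $\ell_p<p$ used in Lemma~\ref{lem:l_p} and is supplied by Proposition~\ref{prop:upper_bound_eth}; without it, the ``take the maximum over the finitely many bad primes'' step in the converse would be meaningless. I expect this to be the only point requiring care. The remaining care is purely a matter of discarding finitely many small primes: those $p<5$ (which are $2$ and $3$ and do not affect any finiteness assertion) and the finitely many $p\leq M$ for which $B_{p-k}$ must be read off correctly. Note that for $p\geq 5$ and $p>k$ the index $p-k$ is even and $\geq 2$ (both $p$ and $k$ being odd), so each $B_{p-k}$ is an honest Bernoulli--Seki number of positive even order and $I(k)$ is unambiguous there; the finitely many primes with $p\leq M$ can be absorbed harmlessly into any ``finite'' statement.
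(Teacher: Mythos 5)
Your proof is correct and follows essentially the same route as the paper, which proves Lemma~\ref{lem:d_p} by running the argument of Lemma~\ref{lem:l_p} with $(\ell_p, W(N))$ replaced by $(\eth_p, I(k))$ and with the well-definedness of $\eth_p$ supplied by Proposition~\ref{prop:upper_bound_eth}; your version is in fact slightly more careful, since you include $M$ in the final maximum $L$ and handle the degenerate primes $p<5$ explicitly. The only microscopic gap is that Proposition~\ref{prop:upper_bound_eth} covers only $p\geq 11$, so the finiteness of $\eth_p$ for $p\in\{5,7\}$ needs the direct check $\eth_5=\eth_7=3$ (both $B_2$ and $B_4$ have numerators coprime to $5$ and $7$), which is immediate.
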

Conjecture~\ref{conj:regular} remains open; however, the infinitude of irregular primes has been proved (Jensen~\cite{Jensen}, Carlitz~\cite{Carlitz2}).
Furthermore, the following lower bound has been obtained by Luca, Pizarro-Madariaga, and Pomerance~\cite{Luca-etal}:
\[
\#\{p\leq x : p \text{ is an irregular prime}\}\geq(1+o(1))\frac{\log\log x}{\log\log\log x},\quad \text{as } x\to\infty.
\]
The relative density of the set of irregular primes in the set of all prime numbers is conjectured to be $1-e^{-\frac{1}{2}}$.
On the contrary, the relative density of the set of Wieferich primes in the set of all prime numbers is conjectured to be zero, and proving the infinitude of Wieferich primes may be highly challenging.
If this conjecture holds true, then $\log_{\cA}(2)$ would become a zero divisor.
\section*{Acknowledgements}
The author thanks Professors Masanobu Kaneko, Toshiki Matsusaka, Pieter Moree, Masataka Ono, and Shingo Saito for their valuable comments.
The author also grateful to Hanamichi Kawamura for discussions.


\begin{thebibliography}{BBBL}
\bibitem[AHB]{Adleman-Heath-Brown}
L. M. Adleman, D. R. Heath-Brown, \emph{The first case of Fermat's last theorem}, Invent.~Math.~\textbf{79} (1985), 409--416.
\bibitem[AAC]{Ankeny-Artin-Chowla}
N. C. Ankeny, E. Artin, S. Chowla, \emph{The class-number of real quadratic fields}, Ann.~of Math.~\textbf{56} (1952), 479--493.
\bibitem[B]{Bal-Rivoal}
K. Ball, T. Rivoal, \emph{Irrationalit\'e d'une infinit\'e de valeurs de la fonction z\^eta aux entiers impairs}, Invent.~Math.~\textbf{146} (2001), 193--207.
\bibitem[BFKS]{Bourgain-etal}
J. Bourgain, K. Ford, S. V. Konyagin, I. E. Shparlinski, \emph{On the divisibility of Fermat quotients}, Michigan Math.~J. \textbf{59} (2010), 313--328.
\bibitem[C1]{Carlitz1}
L. Carlitz, \emph{The class number of an imaginary quadratic field}. Comment.~Math.~Helv.~\textbf{27} (1953), 338--345.
\bibitem[C2]{Carlitz2}
L. Carlitz, \emph{Note on irregular primes}, Proc.~Amer.~Math.~Soc.~\textbf{5} (1954), 329--331.
\bibitem[C3]{Carlitz}
L. Carlitz, \emph{A generalization of Maillet's determinant and a bound for the first factor of the class number}, Proc.~Amer.~Math.~Soc.~\textbf{12} (1961), 256--261.
\bibitem[D]{Dobson}
J. B. Dobson, \emph{On Lerch's formula for the Fermat quotient}, arXiv:1103.3907v6.
\bibitem[E]{Eisenstein}
G. Eisenstein, \emph{Eine neue Gattung zahlentheoretischer Funktionen, welche von zwei Elementen abh\"angen
und durch gewisse lineare Funktional-Gleichungen definiert werden}, Ber.~K. Preuss.~Akad.~Wiss.~Berlin \textbf{15} (1850), 36--42.
\bibitem[G]{Granville}
A. Granville, \emph{Some conjectures related to Fermat's last theorem}, Number theory (Banff, AB, 1988), 177--192. Walter de Gruyter \& Co., Berlin, 1990.
\bibitem[GM]{Gupta-Murty}
R. Gupta, M. Ram Murty, \emph{A remark on Artin's conjecture}, Invent.~Math.~\textbf{78} (1984), 127--130.
\bibitem[HB]{Heath-Brown}
D.R. Heath-Brown, \emph{Artin's conjecture for primitive roots}, Quart.~J. Math.~Oxford Ser.~\textbf{37} (1986), 27--38.
\bibitem[Hof]{Hoffman}
M. E. Hoffman, \emph{Quasi-symmetric functions and mod $p$ multiple harmonic sums}, Kyushu J. Math.~\textbf{69} (2015), 345--366.
\bibitem[Hoo]{Hooley}
C. Hooley, \emph{Artin's conjecture}, J. Reine Angew.~Math.~\textbf{225} (1967), 209--220.
\bibitem[I]{Ichimura}
H. Ichimura, \emph{Note on a congruence for the Fermat quotient with base $2$}, Kyushu J. Math.~\textbf{73} (2019), 115--121.
\bibitem[J]{Jensen}
K. L. Jensen, \emph{Om talteoretiske Egenskaber ved de Bernoulliske Tal}, Nyt Tidsskr.~Mat.~B \textbf{26} (1915), 73--83.
\bibitem[Ka1]{Kaneko1}
M. Kaneko, \emph{Finite multiple zeta values} (in Japanese), Various aspects of multiple zeta values, RIMS K\^oky\^uroku Bessatsu, \textbf{B68} (2017), 175--190.
\bibitem[Ka2]{Kaneko}
M. Kaneko, \emph{An introduction to classical and finite multiple zeta values}, Publications math\'{e}matiques de Besan\c{c}on, Alg\`ebre et th\'eorie des nombres.~2019/1, 103--129.
\bibitem[KMY]{Kaneko-Murakami-Yoshihara}
M. Kaneko, T. Murakami, A. Yoshihara, \emph{On finite multiple zeta values of level two}, Pure Appl.~Math.~Q.~\textbf{19} (2023), 267--280.
\bibitem[Ki]{Kiselev}
A. A. Kiselev, \emph{An expression for the number of classes of ideals of real quadratic fields by means of Bernoulli numbers}, Doklady Akad.~Nauk~SSSR (N.S.) \textbf{61} (1948), 777--779.
\bibitem[Ku]{Kummer}
E. E. Kummer, \emph{Allgemeiner Beweis des Fermatschen Satzes, da{\ss} die Gleichung $x^{\lambda}+y^{\lambda}=z^{\lambda}$ durch ganze Zahlen unl\"osbar ist, f\"ur alle diejenigen Potenz-Exponenten $\lambda$, welche ungerade Primzahlen sind und in den Z\"ahlern der ersten $\frac{1}{2}(\lambda-3)$ Bernoullischen Zahlen als Factoren nicht vorkommen}, J. Reine Angew.~Math.~\textbf{40} (1850), 131--138.
\bibitem[Len]{Lenstra}
H. W. Lenstra Jr., \emph{Miller's primality test}, Inform.~Process.~Lett.~\textbf{8} (1979), 86--88.
\bibitem[Ler]{Lerch}
M. Lerch, \emph{Zur Theorie des Fermatschen Qutienten $\frac{a^{p-1}-1}{p}=q(a)$}, Math.~Ann.~\textbf{60} (1905), 471--490.
\bibitem[LPMP]{Luca-etal}
F. Luca, A. Pizarro-Madariaga, C. Pomerance, \emph{On the counting function of irregular primes}, Indag.~Math.~(N.S.) \textbf{26} (2015), 147--161. 
\bibitem[M]{Mochizuki}
S. Mochizuki, \emph{Inter-universal Teichm\"uller theory $IV$: Log-volume computations and set-theoretic foundations}, Publ.~Res.~Inst.~Math.~Sci.~\textbf{57} (2021), 627--723.
\bibitem[Sa]{Saito}
S. Saito, \emph{Numerical tables of finite multiple zeta values}, Various aspects of multiple zeta values, RIMS K\^oky\^uroku Bessatsu \textbf{B68} (2017), 191--208.
\bibitem[Si]{Silverman}
J. H. Silverman, \emph{Wieferich's criterion and the $abc$-conjecture}, J. Number Theory \textbf{30} (1988), 226--237.
\bibitem[Sk]{Skula}
L. Skula, \emph{A note on some relations among special sums of reciprocals modulo $p$}, Math.~Slovaca \textbf{58} (2008), 5--10.
\bibitem[SV]{Stafford-Vandiver}
E. Stafford, H. S. Vandiver, \emph{Determination of some properly irregular cyclotomic fields}, Broc.~Nat.~Acad.~Sci.~U.S.A. \textbf{16} (1930), 139--150.
\bibitem[T]{Tasaka}
K. Tasaka, \emph{Finite and symmetric colored multiple zeta values and multiple harmonic $q$-series at roots of unity}, Selecta Math.~New Ser.~\textbf{27}, 21 (2021), 34 pp.
\bibitem[V1]{Vandiver1}
H. S. Vandiver, \emph{On the first factor of the class number of a cyclotomic field}, Bull.~Amer.~Math.~Soc.~\textbf{25} (1919), 458--461.
\bibitem[V2]{Vandiver}
H. S. Vandiver, \emph{On Bernoulli's numbers and Fermat's last theorem}, Duke Math.~J.~\textbf{3} (1937), 569--584.
\bibitem[Wie]{Wieferich}
A. Wieferich, \emph{Zum letzten Fermatschen Theorem}, J. Reine Angew.~Math.~\textbf{136} (1909), 293--302.
\bibitem[Wil]{Wiles}
A. Wiles, \emph{Modular elliptic curves and Fermat's last theorem}, Ann.~of Math.~\textbf{141} (1995), 443--551.
\bibitem[YZ]{Yuan-Zhao}
H. Yuan, J. Zhao, \emph{Double shuffle relations of double zeta values and the double Eisenstein series at level $N$}, J. Lond.~Math.~Soc.~(2)~\textbf{92} (2015), 520--546.
\bibitem[Zh]{Zhao}
J. Zhao, \emph{Wolstenholme type theorem for multiple harmonic sums}, Int.~J. Number Theory~\textbf{4} (2008), 73--106.
\bibitem[Zu]{Zudilin}
V. V. Zudilin [W. Zudilin], \emph{One of the numbers $\zeta(5)$, $\zeta(7)$, $\zeta(9)$, $\zeta(11)$ is irrational}, Uspekhi Mat.~Nauk \textbf{56} (2001), 149--150.
\end{thebibliography}
\end{document}